\newcommand{\Spec}{\operatorname{Spec}}
\renewcommand{\phi}{\varphi}
\newcommand{\rs}{\operatorname{r.s}}
\newcommand{\Ker}{\operatorname{Ker}}
\newcommand{\Ima}{\operatorname{Im}}
\newcommand{\Ann}{\operatorname{Ann}}
\newtheorem{proposition}{Proposition}[section]
\newtheorem{lemma}[proposition]{Lemma}
\newtheorem{corollary}[proposition]{Corollary}
\newtheorem{theorem}[proposition]{Theorem}
\theoremstyle{definition}
\newtheorem{definition}[proposition]{Definition}
\newtheorem{example}[proposition]{Example}
\newtheorem{remark}[proposition]{Remark}
\patchcmd{\@settitle}{\uppercasenonmath\@title}{}{}{}
\patchcmd{\@setauthors}{\MakeUppercase}{}{}{}
\begin{document}

\title[Topological groups and rings]{Some notes on topological rings and their groups of units}

\author[A. Tarizadeh]{Abolfazl Tarizadeh}
\address{Department of Mathematics, Faculty of Basic Sciences, University of Maragheh, Maragheh, East Azerbaijan Province, Iran.}
\email{ebulfez1978@gmail.com}

\date{}
\subjclass[2010]{13J20, 16W80, 14A05}
\keywords{Topological group; Topological ring; Absolute topological ring; Group of units; I-adic topology}

\begin{abstract} If $R$ is a topological ring then  $R^{\ast}$, the group of units of $R$, with the subspace topology is not necessarily a topological group. This leads us to the following natural definition: By an \emph{absolute topological ring} we mean a topological ring such that its group of units with the subspace topology is a topological group.
We prove that every commutative ring with the $I$-adic topology is an absolute topological ring. Next, we prove that if $I$ is an ideal of a ring $R$ then for the $I$-adic topology over $R$ we have $\pi_{0}(R)=R/(\bigcap\limits_{n\geqslant1}I^{n})=t(R)$ where $\pi_{0}(R)$ is the space of connected components of $R$ and $t(R)$ is the space of irreducible closed subsets of $R$. 
We observed that the main result of Koh \cite{kwangil} as well as its corrected form \cite[Chap II, \S12, Theorem 12.1]{Ursul} are not true, and then we corrected this result in the right way. In the Wikipedia pages, it is claimed that ``the identity component of a topological group is always a characteristic subgroup'', we also provide a counterexample to this claim. Finally, we fix a gap in the proof of the fact that every epimorphism of the category of Hausdorff topological spaces has a dense image. 
\end{abstract}

\maketitle

\section{Introduction}

The group of units of a given topological ring with the subspace topology is not necessarily a topological group. This leads us to the notion of an \emph{absolute topological ring} (see Definition \ref{Definition I}). We prove that every commutative ring with the $I$-adic topology is an absolute topological ring (see Theorem \ref{Theorem 3}). Next in Theorem \ref{Theorem I}, we show that the group of units $R^{\ast}$ of a given topological ring $R$ by the topology $\mathscr{T}_{f}$ induced by the map $f:R^{\ast}\rightarrow R\times R$ which is given by $a\mapsto(a,a^{-1})$ is a topological group. This theorem gives us a characterization result (see Corollary \ref{Coro 3 iii}) which asserts that $R^{\ast}$ with the subspace topology $\mathscr{T}$ is a topological group if and only if $\mathscr{T}=\mathscr{T}_{f}$. \\

If $G$ is a topological group then it is shown that every monomial function $G^{n}\rightarrow G$ given by $(x_{1},\ldots,x_{n})\mapsto ax_{1}^{d_{1}}\ldots x_{n}^{d_{n}}$ is continuous where $a\in G$ and each $d_{k}\in\mathbb{Z}$. Similarly, if $R$ is a topological ring then we show that every polynomial function $R^{n}\rightarrow R$ is continuous. This observation has several consequences (especially it unifies various known results as particular cases). \\

In this article, we also give a special consideration to the $I$-adic topology. Especially by using the theory of topological groups and rings, we obtain the following theorem which is one of the main results of this article. First recall that for a given topological space $X$, by $\pi_{0}(X)$ we mean the space of connected components of $X$ and by $t(X)$ we mean the space of irreducible closed subsets of $X$.

\begin{theorem} Let $I$ be an ideal of a commutative ring $R$. Consider the $I$-adic topology over $R$, then we have the following equalities of topological spaces: $$\pi_{0}(R)= R/(\bigcap\limits_{n\geqslant1}I^n)=t(R).$$
\end{theorem}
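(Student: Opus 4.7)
Write $J = \bigcap_{n\geq 1} I^n$. The foundational observation I would establish first is that $\overline{\{a\}} = a + J$ in the $I$-adic topology: a basic open neighborhood of $b$ has the form $b + I^n$, and $a \in b + I^n$ iff $b - a \in I^n$, so $b \in \overline{\{a\}}$ iff $b \in a + I^n$ for every $n$. In particular every closed subset of $R$ is a union of cosets of $J$ (equivalently, every open subset is $J$-saturated), which forces the quotient map $q\colon R \to R/J$ to be open; this is the fact I will lean on throughout.

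For the first equality $\pi_0(R) = R/J$: I would use that $(R,+)$ with the $I$-adic topology is a topological group in which each $I^n$ is an open subgroup, hence also closed (its complement is a union of cosets, each open). Therefore the connected component $C$ of $0$ lies inside $\bigcap_n I^n = J$. Conversely, $J = \overline{\{0\}}$ is the closure of a connected set and so is itself connected, giving $J \subseteq C$. Thus $C = J$, and by translation-invariance the component of any $a$ is $a + J$, which yields $\pi_0(R) = R/J$ as sets. Because $q$ is open, the quotient topology on $\pi_0(R)$ coincides with the natural $(I/J)$-adic topology on $R/J$.

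For the second equality $t(R) = R/J$: Every irreducible closed subset $F \subseteq R$ is $J$-saturated (by the foundational observation), so $q(F)$ is closed (since $R \setminus F$ is also saturated and open, and $q$ is open) and irreducible in $R/J$. Now $R/J$ is Hausdorff because $\bigcap_n (I/J)^n = 0$ makes $\{0\}$ closed in the topological group $R/J$, and in a Hausdorff space the only irreducible closed subsets are singletons. Hence the irreducible closed subsets of $R$ are exactly the cosets $a + J$, giving the set bijection $t(R) \to R/J$, $F \mapsto q(F)$. For the topology, a basic open $\{F \in t(R) : F \cap U \neq \emptyset\}$ with $U$ open in $R$ corresponds, via saturation of $U$, to the image $q(U)$, and since $q$ is open these are exactly the opens of $R/J$.

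The main obstacle I anticipate is not the set-level identifications, which are fairly immediate, but the simultaneous topological matching. Everything hinges on the single structural fact that open sets in the $I$-adic topology are $J$-saturated: this is what makes $q$ open, which in turn is what forces the quotient topology on $\pi_0(R)$, the ``sober-style'' topology on $t(R)$, and the $(I/J)$-adic topology on $R/J$ to coincide. Once this saturation principle is in hand, all three descriptions of the same underlying set automatically agree as topological spaces.
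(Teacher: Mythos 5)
Your proposal is correct, and its skeleton largely coincides with the paper's: both arguments rest on the identities $\overline{\{a\}}=a+J$ with $J=\bigcap_{n\geqslant1}I^{n}$, on each $I^{n}$ being clopen (hence the component of $0$ lies in $J$), and on the openness of the quotient map to match the three topologies. You diverge from the paper at two sub-steps, in both cases with arguments that are at least as clean. First, for $J\subseteq C$ you observe that $J=\overline{\{0\}}$ is the closure of a connected set and hence connected, whereas the paper instead computes $C=\overline{C}=\bigcap_{n}(C+I^{n})=\bigcap_{n}I^{n}$ using its closure formula; your route avoids that formula entirely. Second, and more substantively, to identify the irreducible closed subsets you push an irreducible closed $F$ down to the Hausdorff quotient $R/J$ (Hausdorff because $\bigcap_{n}(I^{n}/J)=0$) and use that irreducible subsets of a Hausdorff space are singletons, whereas the paper argues upstairs: irreducible implies connected, so $Z$ lies in the component $x+J=\overline{\{x\}}$ of any of its points, forcing $Z=\overline{\{x\}}$. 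The paper's version is marginally more economical since it reuses the $\pi_{0}$ computation verbatim; yours buys a conceptual dividend, namely that $R/J$ is exactly the Hausdorff (and sober) reflection of $R$, which explains \emph{why} $\pi_{0}(R)$ and $t(R)$ must agree here. Your closing identification of the opens of $t(R)$ with $q(U)$ via saturation is the complement-side version of the paper's computation with closed sets $t(E)$, and is equally valid.
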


While trying to understand the proof of the main result of Koh \cite{kwangil} we realized that this result as well as its corrected version \cite[Chap II, \S12, Theorem 12.1]{Ursul} are not true. Then we corrected this result in the right way (see Theorem \ref{Theorem 4 iv}). In Theorem \ref{Theorem 7 seven} we also improve one of the main results of Ganesan \cite[Theorem I]{Ganesan} which asserts that a given nonzero ring is a finite nonfield ring if and only if its zerodivisors is a finite nonzero set. We also correct a mathematical error on Wikipedia web pages (see Remark \ref{Remark 3-three 2025}). 

It is widely known that in the category of Hausdorff topological spaces, epimorphisms are precisely continuous maps with dense images. Although several proofs are presented in the literature to the implication that every epimorphism $f:X\rightarrow Y$ of this category has dense image, there are some gaps in the arguments. For instance, some researchers (see e.g. \cite{Lapuyade}), consider the quotient space $Y/F$ (contracting to a point the closed subspace $F=\overline{f(X)}$ of $Y$) and then take its Hausdorff quotient $H(Y/F)$. Since $f$ is an epic, the canonical map $Y/F\rightarrow H(Y/F)$ is a constant function. So far in this argument, everything is correct. 
But if we assume $F\neq Y$, then no contradiction arises eventually. Hence, this approach does not seem to be true. In the online mathematical literature such as \emph{Mathematics Stack Exchange} and \emph{MathOverflow}, to prove the above implication, some other researchers consider the quotient space $(Y\times\{0,1\})/(F\times\{0,1\})$ where $F=\overline{f(X)}$. But this quotient space is not necessarily Hausdorff. The gap in another proof of the above implication is explained in Remark \ref{Remark 5 five 5}. In this Remark, we also provide a correct proof for this implication.   

In this article, all of the rings are assumed to be commutative. But some of the results (especially Theorems \ref{Theorem I} and \ref{Theorem 4 iv}) can be generalized to noncommutative rings.

\section{Main Results}

If $R$ is a topological ring, then its group of units $R^{\ast}=\{a\in R: \exists b\in R, ab=1\}$ with the subspace topology is not necessarily a topological group. In fact, the group operation of $R^{\ast}$ is the restriction of the multiplication map of $R$ and hence it is continuous. But the inverse map $R^{\ast}\rightarrow R^{\ast}$ which is given by $a\mapsto a^{-1}$ is not necessarily continuous. For example, the adele ring of a global field is a topological ring, but its group of units with the subspace topology is not a topological group (this is well-known and can be found in algebraic number theory books with focusing on adele rings).
This observation leads us to the following notion.

\begin{definition}\label{Definition I} By an \emph{absolute topological ring} (or, \emph{topological ring with continuous inverses}) we mean a topological ring such that its group of units with the subspace topology is a topological group.
\end{definition}

In the following result we will observe that every ring can be made into an absolute topological ring in a canonical and nontrivial way. First recall that if $I$ is an ideal of a ring $R$, then there exists a unique topology over $R$ such that the collection of $a+I^{n}$ with $a\in R$ and $n\geqslant1$ a natural number forms a base for its open subsets. This topology is called the $I$-adic topology.  

\begin{theorem}\label{Theorem 3} Let $I$ be an ideal of a ring $R$.
Then $R$ with the $I$-adic topology is an absolute topological ring.
\end{theorem}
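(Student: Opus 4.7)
The plan is to verify the two conditions built into Definition \ref{Definition I}: first, that the $I$-adic topology makes $R$ a topological ring, and second, that the inverse map on $R^{\ast}$ (with the subspace topology) is continuous.

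For the topological ring condition I would recall that the sets $\{a+I^{n}: a\in R,\ n\geqslant 1\}$ form a base of the $I$-adic topology, and $\{I^{n}: n\geqslant 1\}$ is a fundamental system of neighborhoods of $0$. Since each $I^{n}$ is an additive subgroup, addition and negation are continuous. For multiplication at a point $(a,b)\in R\times R$, given a neighborhood $ab+I^{n}$ of the product, the inclusion
\[
(a+I^{n})(b+I^{n})\subseteq ab+aI^{n}+bI^{n}+I^{2n}\subseteq ab+I^{n}
\]
uses precisely that $I^{n}$ is an ideal.

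For the continuity of inversion at a point $a\in R^{\ast}$, given a basic neighborhood $a^{-1}+I^{n}$ of $a^{-1}$, I would show that $(a+I^{n})\cap R^{\ast}$ is sent into it. The algebraic identity
\[
b^{-1}-a^{-1}=-a^{-1}(b-a)b^{-1},
\]
valid for all $a,b\in R^{\ast}$, is the heart of the matter: if $b-a\in I^{n}$ and $I^{n}$ is an ideal, then the right-hand side lies in $I^{n}$, so $b^{-1}\in a^{-1}+I^{n}$. This shows that inversion pulls back basic open neighborhoods to open sets in $R^{\ast}$, hence is continuous.

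Conceptually the argument is straightforward; there is no real obstacle. The only substantive ingredient beyond the standard verification that the $I$-adic topology is a ring topology is the classical identity for the difference of two inverses, together with the absorptive property of the ideal $I^{n}$, which together convert $I$-adic smallness of $b-a$ into $I$-adic smallness of $b^{-1}-a^{-1}$.
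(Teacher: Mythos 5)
Your proof is correct and follows the same strategy as the paper's: a direct verification that addition, multiplication, and inversion on $R^{\ast}$ are continuous for the $I$-adic topology. The only difference is one of packaging — the paper exhibits explicit decompositions of the preimages of basic open sets, whereas you argue pointwise at each element, with the identity $b^{-1}-a^{-1}=-a^{-1}(b-a)b^{-1}$ making the continuity of inversion especially transparent; both computations rest on the same absorptive property of the ideal $I^{n}$.
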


\begin{proof} The additive operation $f:R\times R\rightarrow R$ which is given by $(a,b)\mapsto a+b$ is continuous, because $f^{-1}(a+I^{n})=\bigcup\limits_{r\in R}(r+I^{n})\times(a-r+I^{n})$. The multiplication $g:R\times R\rightarrow R$ which is given by $(a,b)\mapsto ab$ is also continuous, because $g^{-1}(a+I^{n})=\bigcup\limits_{r\in R}V_{r}$ where $V_{r}=(r+I^{n})\times(\bigcup\limits_{\substack{s\in R,\\rs-a\in I^{n}}}s+I^{n})$.
It remains to show that the group of units $R^{\ast}$ with the subspace topology (induced by the $I$-adic topology) is a topological group. Indeed, the inverse map $h:R^{\ast}\rightarrow R^{\ast}$ which is given by $r\mapsto r^{-1}$ is continuous, because $h^{-1}\big((a+I^{n})\cap R^{\ast}\big)=(\bigcup\limits_{\substack{b\in R^{\ast},\\1-ab\in I^{n}}}b+I^{n})\cap R^{\ast}$.
\end{proof}

\begin{remark} In a correspondence with Pierre Deligne, he informed us that the other nice case arises in functional analysis: Every $C^{\ast}$-algebra and more generally every Banach algebra is an absolute topological ring. Also note that using the above definition then a \emph{topological field} means an absolute topological ring such that it is also a field. For example, the field of real numbers with the Euclidean topology is a topological field.
\end{remark}

Recall that if $(X,\mathscr{T})$ is a topological space, $S$ a set and $f:S\rightarrow X$ a map, then clearly the set $\mathscr{T}_{f}=\{f^{-1}(U): U\in\mathscr{T}\}$ is a topology over $S$ and $f$ is made into a continuous map. We call $\mathscr{T}_{f}$ the topology induced by $f$.

\begin{remark}\label{Remark I} Let $f:X\rightarrow Y$ be a continuous map of topological spaces. If $\Ima(f)\subseteq Z\subseteq Y$, then $f$ induces a continuous map $g:X\rightarrow Z$ which is given by $x\mapsto f(x)$ where the topology of $Z$ is the subspace topology. Indeed, $g^{-1}(Z\cap U)=f^{-1}(U)$.
\end{remark}

\begin{lemma}\label{Lemma 1} Let $(R_{k})$ be a family of topological rings. Then the direct product ring $R=\prod\limits_{k}R_{k}$ with the product topology is a topological ring.
\end{lemma}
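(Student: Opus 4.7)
The plan is to use the universal property of the product topology: a map $\phi : X \to \prod_k R_k$ from any topological space $X$ is continuous if and only if every composition $\pi_k \circ \phi$ is continuous, where $\pi_k : \prod_j R_j \to R_k$ denotes the canonical projection. So in order to prove that addition, multiplication and additive inversion on $R$ are continuous, it suffices to verify that each of these maps becomes continuous after composing with every projection.

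First I would treat addition. Let $\alpha : R \times R \to R$ denote componentwise addition. For each index $k$, one has the factorisation $\pi_k \circ \alpha = \alpha_k \circ (\pi_k \times \pi_k)$, where $\alpha_k$ is the addition map of $R_k$. Since $R_k$ is a topological ring, $\alpha_k$ is continuous, and $\pi_k \times \pi_k$ is continuous as a product of two continuous maps; hence $\pi_k \circ \alpha$ is continuous for every $k$, and therefore $\alpha$ itself is continuous. The very same argument works verbatim for multiplication (replace $\alpha_k$ by the multiplication $\mu_k$ of $R_k$) and for additive inversion (replace $\alpha_k$ by the additive inverse map of $R_k$), because in each case the corresponding structure map of $R_k$ is continuous by hypothesis.

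The only point that requires a small check, and that one might call the main obstacle — though a very mild one — is the implicit identification, used in the factorisation $\pi_k \circ \alpha = \alpha_k \circ (\pi_k \times \pi_k)$, of the product topology on $\bigl(\prod_k R_k\bigr) \times \bigl(\prod_k R_k\bigr)$ with that on $\prod_k (R_k \times R_k)$. This is a standard property of initial topologies: both coincide with the initial topology with respect to the two families of projections to the factors $R_k$. With this cosmetic identification in hand, the lemma becomes an immediate consequence of the universal property of the product.
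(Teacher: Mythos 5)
Your proof is correct. The paper itself gives no argument here (it simply declares the lemma a ``well known and easy exercise''), and your proof via the universal property of the product topology --- checking that $\pi_k\circ\alpha=\alpha_k\circ(\pi_k\times\pi_k)$ is continuous for each $k$, and likewise for multiplication --- is exactly the standard way to fill that gap; the identification of $\bigl(\prod_k R_k\bigr)\times\bigl(\prod_k R_k\bigr)$ with $\prod_k(R_k\times R_k)$ that you flag is indeed the only point needing a word, and you handle it correctly.
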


\begin{proof} It is well-known and easy exercise.
\end{proof}

For a given topological ring $R$, in order to make $R^{\ast}$ a topological group first we extend its topology as follows. Consider the map $f:R^{\ast}\rightarrow R\times R$ given by $a\mapsto(a,a^{-1})$. Clearly the topology over $R^{\ast}$ induced by $f$ is finer than the subspace topology, because $R^{\ast}\cap U=f^{-1}(U\times R)$.

\begin{theorem}\label{Theorem I} Let $R$ be a topological ring and consider the map $f:R^{\ast}\rightarrow R\times R$ which is given by $a\mapsto(a,a^{-1})$. Then $R^{\ast}$ with the topology induced by $f$ is a topological group.
\end{theorem}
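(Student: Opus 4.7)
The plan is to exploit the universal property of the induced topology $\mathscr{T}_{f}$: a map $g\colon Y\to R^{\ast}$ into $(R^{\ast},\mathscr{T}_{f})$ is continuous if and only if the composite $f\circ g\colon Y\to R\times R$ is continuous. Thus to verify that multiplication and inversion on $R^{\ast}$ are continuous it is enough to check that their composites with $f$ are continuous maps into $R\times R$, and for this I may freely use that $R$ (and hence $R\times R$ by Lemma \ref{Lemma 1}) is a topological ring.

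For the inverse map $h\colon R^{\ast}\to R^{\ast}$, $a\mapsto a^{-1}$, compute
\[
(f\circ h)(a)=(a^{-1},a)=\sigma\bigl(f(a)\bigr),
\]
where $\sigma\colon R\times R\to R\times R$ is the coordinate swap. Since $\sigma$ and $f$ are both continuous, $f\circ h$ is continuous, and hence so is $h$ by the universal property.

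For the multiplication $m\colon R^{\ast}\times R^{\ast}\to R^{\ast}$, $(a,b)\mapsto ab$, compute
\[
(f\circ m)(a,b)=\bigl(ab,\,b^{-1}a^{-1}\bigr).
\]
I factor this as
\[
R^{\ast}\times R^{\ast}\xrightarrow{\,f\times f\,}(R\times R)\times(R\times R)\xrightarrow{\,\phi\,}R\times R,
\]
where $\phi\bigl((x,y),(u,v)\bigr)=(xu,\,vy)$. The map $f\times f$ is continuous because it is the product of two continuous maps (and the topology on $R^{\ast}\times R^{\ast}$ is the product of two copies of $\mathscr{T}_{f}$). The map $\phi$ is continuous because it is built from the multiplication of the topological ring $R$. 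Thus $f\circ m$ is continuous, and the universal property gives continuity of $m$.

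The only mildly delicate point is the bookkeeping around the universal property of $\mathscr{T}_{f}$; once one accepts that the topology on $R^{\ast}$ is precisely the pullback of the product topology on $R\times R$ along $f$, the continuity of $h$ and $m$ is forced by the fact that the ring operations in $R$ (together with the swap map) already carry out all the needed computations on the level of $R\times R$. So I do not expect a real obstacle, only the need to be careful that the source topology on $R^{\ast}\times R^{\ast}$ (the product of two copies of $\mathscr{T}_{f}$) is what is used when invoking continuity of $f\times f$.
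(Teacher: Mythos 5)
Your proof is correct. The computational core is the same as the paper's: inversion becomes the coordinate swap on $R\times R$, and multiplication factors as $(f\times f)$ followed by a map built from the (coordinatewise) multiplication of the topological ring $R\times R$. Where you differ is in how continuity is transferred back to $(R^{\ast},\mathscr{T}_{f})$: you invoke the universal property of the induced (initial) topology --- $g\colon Y\to(R^{\ast},\mathscr{T}_{f})$ is continuous iff $f\circ g$ is continuous --- whereas the paper proves, in effect, a concrete instance of that property by showing that $f$ induces a homeomorphism $\theta\colon R^{\ast}\rightarrow Z=\Ima(f)$ onto its image with the subspace topology (checking that $\theta$ is bijective, continuous, and open), and then writes the multiplication as $\theta^{-1}\circ\psi$. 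Your route is a bit leaner: it avoids having to observe that $\Ima(\phi)=\Ima(f)$ and that $\theta$ is an open map. The only thing you should make explicit is the one-line verification of the universal property itself (for a single map, $\mathscr{T}_{f}=\{f^{-1}(U)\}$ is a topology and $g^{-1}(f^{-1}(U))=(f\circ g)^{-1}(U)$ gives both directions), since the paper defines $\mathscr{T}_{f}$ but never states this property. Your choice of $\phi\bigl((x,y),(u,v)\bigr)=(xu,vy)$, giving $(ab,b^{-1}a^{-1})$, has the incidental merit of working verbatim for noncommutative rings, whereas the paper's $(ac,bd)$ version uses commutativity to identify $a^{-1}b^{-1}$ with $(ab)^{-1}$; both are fine here since the paper assumes commutativity.
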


\begin{proof} The inverse map $g:R^{\ast}\rightarrow R^{\ast}$ which is given by $a\mapsto a^{-1}$ is continuous, because $g^{-1}\big(f^{-1}(U\times V)\big)=f^{-1}(V\times U)$. Next we show that the group operation $h:R^{\ast}\times R^{\ast}\rightarrow R^{\ast}$ which is given by $(a,b)\mapsto ab$ is continuous. By Lemma \ref{Lemma 1}, the product ring $S:=R\times R$ with the product topology is a topological ring. Hence, its multiplication $g:S\times S\rightarrow S$ which is given by $\big((a,b),(c,d)\big)\mapsto(ac,bd)$
is continuous. Thus the map $\phi:=g\circ(f\times f):R^{\ast}\times R^{\ast}\rightarrow S$ is continuous and we have $Z:=\Ima(\phi)=\Ima(f)$. Then by Remark \ref{Remark I}, $\phi$ induces a continuous map $\psi:R^{\ast}\times R^{\ast}\rightarrow Z$ which is given by $(a,b)\mapsto(ab,a^{-1}b^{-1})$. Then we show that $f$ induces a homeomorphism $\theta:R^{\ast}\rightarrow Z$ onto its image which is given by $a\mapsto f(a)$ where the topology of $Z$ is the subspace topology. Clearly the map $\theta$ is bijective. By Remark \ref{Remark I}, it is  continuous. The map $\theta$ is also an open map, because $\theta\big(f^{-1}(U\times V)\big)=(U\times V)\cap Z$. Hence, $\theta$ is a homeomorphism. Thus its inverse $\theta^{-1}$ and so
$h=\theta^{-1}\circ\psi$ are continuous.
\end{proof}

\begin{corollary}\label{Coro 3 iii} Let $R$ be a topological ring and consider the map $f:R^{\ast}\rightarrow R\times R$ which is given by $a\mapsto(a,a^{-1})$. Then $R^{\ast}$ with the subspace topology $\mathscr{T}$ is a topological group if and only if $\mathscr{T}=\mathscr{T}_{f}$.
\end{corollary}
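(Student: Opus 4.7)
The plan is to prove the two directions separately, using Theorem~\ref{Theorem I} for one direction and the universal property of the product topology for the other. The main obstacle is essentially nil here, since one inclusion of topologies is already recorded in the paragraph preceding Theorem~\ref{Theorem I}; the content of the corollary is to see that the topological group axiom supplies the reverse inclusion.

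For the backward direction, suppose $\mathscr{T}=\mathscr{T}_{f}$. Then by Theorem~\ref{Theorem I}, $R^{\ast}$ with $\mathscr{T}_{f}$ is a topological group, so $R^{\ast}$ with the subspace topology $\mathscr{T}$ is a topological group.

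For the forward direction, suppose that $R^{\ast}$ with the subspace topology $\mathscr{T}$ is a topological group. Recall from the discussion immediately before Theorem~\ref{Theorem I} that $\mathscr{T}_{f}$ is always finer than $\mathscr{T}$, since $R^{\ast}\cap U=f^{-1}(U\times R)$ for every open $U\subseteq R$. So it suffices to show that $\mathscr{T}$ is finer than $\mathscr{T}_{f}$, or equivalently, that the map $f:R^{\ast}\rightarrow R\times R$ given by $a\mapsto(a,a^{-1})$ is continuous when $R^{\ast}$ is equipped with $\mathscr{T}$.

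By the universal property of the product topology, it suffices to show that the two component maps $f_{1},f_{2}:R^{\ast}\rightarrow R$ given by $f_{1}(a)=a$ and $f_{2}(a)=a^{-1}$ are continuous. The map $f_{1}$ is the inclusion $R^{\ast}\hookrightarrow R$, which is continuous by definition of the subspace topology. For $f_{2}$, observe that $f_{2}$ factors as the composition of the inverse map $R^{\ast}\rightarrow R^{\ast}$, $a\mapsto a^{-1}$, followed by the inclusion $R^{\ast}\hookrightarrow R$; the former is continuous by the topological group hypothesis on $(R^{\ast},\mathscr{T})$, and the latter is continuous as noted above. Hence $f_{2}$ is continuous, and therefore so is $f$, yielding $\mathscr{T}_{f}\subseteq\mathscr{T}$ and completing the proof.
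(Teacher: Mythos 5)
Your proof is correct and follows essentially the same route as the paper: the backward direction is Theorem~\ref{Theorem I}, and the forward direction reduces to continuity of the inverse map on $(R^{\ast},\mathscr{T})$ together with the already-recorded inclusion $\mathscr{T}\subseteq\mathscr{T}_{f}$. Your appeal to the universal property of the product topology is just a repackaging of the paper's explicit identity $f^{-1}(U\times V)=R^{\ast}\cap U\cap g^{-1}(R^{\ast}\cap V)$.
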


\begin{proof} If $R^{\ast}$ with the subspace topology $\mathscr{T}$ is a topological group, then the inverse map $g:R^{\ast}\rightarrow R^{\ast}$ is continuous. We have $f^{-1}(U\times V)=R^{\ast}\cap U\cap g^{-1}(R^{\ast}\cap V)$. This shows that $\mathscr{T}_{f}\subseteq\mathscr{T}$. We also have $\mathscr{T}\subseteq\mathscr{T}_{f}$. Hence, $\mathscr{T}=\mathscr{T}_{f}$. The reverse implication follows from Theorem \ref{Theorem I}.
\end{proof}

\begin{remark}\label{Remark II} Remember that if $f,g:X\rightarrow R$ are continuous functions with $X$ a topological space and $R$ a topological ring, then the pointwise addition $f+g:X\rightarrow R$ given by $x\mapsto f(x)+g(x)$ and the pointwise multiplication $f\cdot g:X\rightarrow R$ given by $x\mapsto f(x)g(x)$ are continuous. Indeed, the map $h:X\rightarrow R\times R$ given by $x\mapsto\big(f(x),g(x)\big)$ is continuous, because $h^{-1}(U\times V)=f^{-1}(U)\cap g^{-1}(V)$. Thus $f+g=\alpha\circ h$ and $f\cdot g=\beta\circ h$ are continuous where $\alpha$ and $\beta$ are the addition and multiplication of $R$, respectively. If  $f,g:X\rightarrow G$ are continuous functions with $G$ a topological group, then exactly like the above it can be seen that the pointwise multiplication $f\cdot g:X\rightarrow G$ is continuous.
The set of all continuous functions $X\rightarrow R$ is usually denoted by $C(X,R)$. This set by the above operations is a ring.
It is worth mentioning that the following two special cases of the ring $C(X,R)$ are of particular interest in mathematics (especially in commutative algebra and mathematical analysis) which are including $C(X):=C(X,\mathbb{R})$ and $H_{0}(A):=C(\Spec(A),\mathbb{Z})$ where $A$ is a commutative ring and $\mathbb{Z}$ is equipped with the discrete topology. For the second case see e.g. \cite[Theorem 5.2]{A. Tarizadeh}.
\end{remark}

The above remark leads us to the following result.

\begin{lemma}\label{Lemma 2} $\mathbf{(i)}$ If $G$ is a topological group then every monomial function $G^{n}\rightarrow G$ given by $(x_{1},\ldots,x_{n})\mapsto ax_{1}^{d_{1}}\ldots x_{n}^{d_{n}}$ is continuous where $a\in G$ and each $d_{k}\in\mathbb{Z}$. \\
$\mathbf{(ii)}$ If $R$ is a topological ring then every polynomial function $R^{n}\rightarrow R$ given by $(r_{1},\ldots,r_{n})\mapsto f(r_{1},\ldots,r_{n})$ is continuous where $f(x_{1},\ldots,x_{n})\in R[x_{1},\ldots,x_{n}]$.
\end{lemma}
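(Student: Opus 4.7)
The plan is to deduce both parts directly from Remark \ref{Remark II}, which tells us that pointwise products of continuous maps into a topological group, and pointwise sums and products of continuous maps into a topological ring, are again continuous.

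For part (i), I would assemble two elementary classes of continuous maps: the $n$ coordinate projections $\pi_{k}:G^{n}\to G$, continuous by definition of the product topology, and the constant map $G^{n}\to G$ with value $a$. Next, for each integer $d$, I would argue that the $d$-th power map $p_{d}:G\to G$, $g\mapsto g^{d}$, is continuous: for $d\geqslant 0$ by induction on $d$ using the continuous group multiplication (with $p_{0}$ constant at the identity), and for $d<0$ by composing $p_{-d}$ with the continuous inversion map. Composing $p_{d_{k}}$ with $\pi_{k}$ shows that the function $(x_{1},\ldots,x_{n})\mapsto x_{k}^{d_{k}}$ is continuous on $G^{n}$. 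An $n$-fold application of the pointwise-product part of Remark \ref{Remark II} (left-multiplying successively by the constant $a$ and then by each $x_{k}^{d_{k}}$) then yields the continuity of $(x_{1},\ldots,x_{n})\mapsto a\,x_{1}^{d_{1}}\cdots x_{n}^{d_{n}}$.

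Part (ii) is completely parallel: for any $c\in R$ and nonnegative integers $d_{1},\ldots,d_{n}$ the monomial function $(r_{1},\ldots,r_{n})\mapsto c\,r_{1}^{d_{1}}\cdots r_{n}^{d_{n}}$ is continuous on $R^{n}$ by the same construction, this time using the continuous multiplication of the topological ring $R$ in place of the group law. A polynomial $f\in R[x_{1},\ldots,x_{n}]$ is a finite sum of such monomials, so applying the pointwise-sum half of Remark \ref{Remark II} finitely many times gives the continuity of the induced function $R^{n}\to R$.

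I do not expect a genuine obstacle here; the only point worth a moment of care is the continuity of the power map $p_{d}$, which in a possibly noncommutative group one should justify by a short induction on $|d|$ rather than by waving at ``$g^{d}$''. Everything else amounts to formal bookkeeping with projections, constant maps, and the closure of continuous maps into $G$ or $R$ under the pointwise operations supplied by Remark \ref{Remark II}.
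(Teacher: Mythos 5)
Your proposal is correct and follows essentially the same route as the paper: both arguments reduce everything to the continuity of the projections $\pi_{k}$, the constant map, and the inversion map, and then invoke Remark \ref{Remark II} to close under pointwise products (and, for part (ii), pointwise sums over the finitely many monomials of $f$). The only cosmetic difference is that you first build the power map $p_{d}:G\rightarrow G$ and compose it with $\pi_{k}$, whereas the paper forms $(\pi_{k})^{d}$ directly as an iterated pointwise product of $\pi_{k}$ with itself; these are interchangeable.
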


\begin{proof} (i): For each $k$, the projection map $\pi_{k}:G^{n}\rightarrow G$ given by $(x_{1},\ldots,x_{n})\mapsto x_{k}$ is continuous, because $G^{n}$ is equipped with the product topology. The inverse map $G\rightarrow G$ is also continuous. Hence, the map $G^{n}\rightarrow G$ given by $(x_{1},\ldots,x_{n})\mapsto x_{k}^{-1}$ is continuous. By Remark \ref{Remark II}, the pointwise multiplication $f\cdot g:X=G^{n}\rightarrow G$ of every two continuous functions $f,g:X\rightarrow G$ is continuous. Hence for each $d\in \mathbb{Z}$ the map $(\pi_{k})^{d}:G^{n}\rightarrow G$ given by $(x_{1},\ldots,x_{n})\mapsto x_{k}^{d}$ is continuous. The constant function $h:G^{n}\rightarrow G$ given by $(x_{1},\ldots,x_{n})\mapsto a$ is continuous. Again by Remark \ref{Remark II}, the pointwise multiplication $h\cdot\big(\prod\limits_{k=1}^{n}(\pi_{k})^{d_{k}}\big):G^{n}\rightarrow G$ given by $(x_{1},\ldots,x_{n})\mapsto ax_{1}^{d_{1}}\ldots x_{n}^{d_{n}}$ is continuous. \\
(ii): Similarly to the above case, it can be seen that the monomial function $R^{n}\rightarrow R$ given by $(r_{1},\ldots,r_{n})\mapsto ar_{1}^{d_{1}}\ldots r_{n}^{d_{n}}$ is continuous where $a\in R$ and each $d_{k}\geqslant0$. By Remark \ref{Remark II}, the pointwise addition $g+h:R^{n}\rightarrow R$ of every two continuous functions $g,h:R^{n}\rightarrow R$ is continuous. Thus the map $R^{n}\rightarrow R$ given by $(r_{1},\ldots,r_{n})\mapsto f(r_{1},\ldots,r_{n})$ is continuous.
\end{proof}

\begin{remark} Recall that if $A$ and $B$ are subsets of a group $G$, then $AB=\{ab: a \in A, b\in B\}=\bigcup\limits_{a\in A}aB=\bigcup\limits_{b\in B}Ab$ and $A^{-1}=\{x\in G: x^{-1}\in A\}$. If $U$ is an open subset of a topological group $G$ and $a\in G$ then $U^{-1}$, $aU$ and $Ua$ are open subsets of $G$ and so for any subset $S\subseteq G$, then $SU=\bigcup\limits_{s\in S}sU$ and $US=\bigcup\limits_{s\in S}Us$ are open subsets of $G$. Similarly, if $E\subseteq G$ is a closed subset then $E^{-1}$, $aE$ and $Ea$ are closed subsets. But for an infinite subset $S$, in general neither $ES$ nor $SE$ are closed in $G$.
\end{remark}

\begin{corollary} If $U$ is an open neighborhood of the identity element $e$ of a topological group $G$, then for each $n\geqslant1$ there exists an open neighborhood $V$ of $e$ in $G$ such that $V^{n}\subseteq U$.
\end{corollary}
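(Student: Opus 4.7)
The plan is to reduce the statement to the continuity of a single map: the $n$-fold product map $\mu:G^{n}\rightarrow G$ given by $(x_{1},\ldots,x_{n})\mapsto x_{1}x_{2}\cdots x_{n}$. This is precisely the monomial function from Lemma \ref{Lemma 2}(i) with $a=e$ and $d_{1}=\cdots=d_{n}=1$, hence continuous.

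Once continuity of $\mu$ is in hand, I would argue as follows. Since $\mu(e,\ldots,e)=e\in U$, the preimage $\mu^{-1}(U)$ is an open subset of $G^{n}$ containing the point $(e,\ldots,e)$. By the definition of the product topology, there exist open neighborhoods $V_{1},\ldots,V_{n}$ of $e$ in $G$ such that $V_{1}\times\cdots\times V_{n}\subseteq \mu^{-1}(U)$. Setting $V:=V_{1}\cap\cdots\cap V_{n}$ gives an open neighborhood of $e$ satisfying $V^{n}\subseteq V_{1}V_{2}\cdots V_{n}\subseteq U$, which is the desired conclusion.

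As a backup, if one prefers not to invoke Lemma \ref{Lemma 2}(i), the same statement follows by an easy induction on $n$: the base case $n=1$ is trivial (take $V=U$), and for the inductive step one uses continuity of the multiplication $m:G\times G\rightarrow G$ at $(e,e)$ to find an open neighborhood $W$ of $e$ with $W^{2}\subseteq U$, then applies the inductive hypothesis to $W$ to obtain $V$ with $V^{n}\subseteq W$, yielding $V^{n+1}\subseteq W\cdot W\subseteq U$. I expect no real obstacle here; the only mild subtlety is passing from the abstract neighborhood of $(e,\ldots,e)$ in $G^{n}$ to a basic open box, which is handled by the definition of the product topology.
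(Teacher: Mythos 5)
Your main argument is exactly the paper's proof: apply Lemma \ref{Lemma 2}(i) to the $n$-fold product map, pull back $U$, shrink the resulting basic open box to a single neighborhood $V$ by intersection, and conclude $V^{n}\subseteq U$. The proposal is correct and matches the paper's approach; the inductive backup is a fine alternative but not needed.
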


\begin{proof} By Lemma \ref{Lemma 2}(i), the map $f:G^{n}\mapsto G$ given by $(x_{1},\ldots,x_{n})\mapsto x_{1}\cdots x_{n}$ is continuous, and so $f^{-1}(U)$ is an open subset. Clearly the $n$-tuple $(e,\ldots,e)$ is a member of $f^{-1}(U)$. Thus for each $k$ there exists an open subset $V_{k}$ in $G$ such that $(e,\ldots,e)\in\prod\limits_{k=1}^{n}V_{k}\subseteq f^{-1}(U)$. Then clearly $e\in V:=\bigcap\limits_{k=1}^{n}V_{k}$ and $V^{n}\subseteq U$.
\end{proof}

Recall that for any ring $R$ by $\mathcal{B}(R)=\{e\in R: e=e^{2}\}$ we mean the set of all idempotents of $R$ which is a commutative ring whose addition is $e\oplus e':=e+e'-2ee'$ and whose multiplication is $e\cdot e'=ee'$. We call $\mathcal{B}(R)$ the Boolean ring of $R$. For more information on this ring we refer the interested reader to \cite{Tarizadeh-Taheri}. We know that every subring of a topological ring with the subspace topology is a topological ring. But note that $\mathcal{B}(R)$ is not necessarily a subring of $R$. In spite of this, the property of being a topological ring is still preserved by Booleanization:

\begin{corollary} If $R$ is a topological ring, then the Boolean ring $\mathcal{B}(R)$ with the subspace topology is a topological ring.
\end{corollary}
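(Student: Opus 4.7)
The plan is to reduce everything to Lemma \ref{Lemma 2}(ii). Consider the two polynomial maps
$$\mu,\sigma:R\times R\to R,\qquad \mu(x,y)=xy,\qquad \sigma(x,y)=x+y-2xy.$$
Both are polynomial functions on $R$, so by Lemma \ref{Lemma 2}(ii) they are continuous. These are exactly the maps whose restrictions to $\mathcal{B}(R)\times\mathcal{B}(R)$ give the multiplication and the Boolean addition $\oplus$ of $\mathcal{B}(R)$, and since $\mathcal{B}(R)$ is a (Boolean) ring under these operations, the images of these restrictions land in $\mathcal{B}(R)$.

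Next, I would make the standard observation that the subspace topology on $\mathcal{B}(R)\times\mathcal{B}(R)$ inherited from the product topology on $R\times R$ coincides with the product of the subspace topologies on the two copies of $\mathcal{B}(R)$; both have a basis consisting of the sets $(U\cap\mathcal{B}(R))\times(V\cap\mathcal{B}(R))$ for $U,V$ open in $R$. Therefore, restricting the continuous maps $\mu,\sigma$ to the subspace $\mathcal{B}(R)\times\mathcal{B}(R)\subseteq R\times R$ yields continuous maps into $R$.

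Finally, apply Remark \ref{Remark I} to each of these restrictions with $Z=\mathcal{B}(R)\subseteq R$ to conclude that the induced maps $\mathcal{B}(R)\times\mathcal{B}(R)\to\mathcal{B}(R)$, namely $(e,e')\mapsto ee'$ and $(e,e')\mapsto e\oplus e'=e+e'-2ee'$, are continuous when $\mathcal{B}(R)$ is given the subspace topology from $R$. This proves that $\mathcal{B}(R)$ with the subspace topology is a topological ring.

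There is no serious obstacle here; the proof is essentially bookkeeping. The only point that deserves care is that, a priori, $\mathcal{B}(R)$ is not a subring of $R$ (as emphasized just before the statement), so one cannot argue by simply saying that subrings of topological rings are topological rings. The workaround is precisely the one above: the Boolean operations extend to polynomial operations on the ambient topological ring $R$, and Lemma \ref{Lemma 2}(ii) together with Remark \ref{Remark I} takes care of continuity on the subspace.
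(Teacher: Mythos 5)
Your proof is correct and follows essentially the same route as the paper: the key point in both is that the Boolean addition $e\oplus e'=e+e'-2ee'$ is the restriction of the polynomial function $(a,b)\mapsto a+b-2ab$, which is continuous by Lemma \ref{Lemma 2}(ii), while the multiplication is the restriction of that of $R$. Your extra remarks on the product/subspace topologies and the use of Remark \ref{Remark I} just make explicit the bookkeeping the paper leaves implicit.
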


\begin{proof} The multiplication of $\mathcal{B}(R)$ is the restriction of the multiplication of $R$ and hence it is continuous. Consider the polynomial $f(x,y)=x+y-2xy$ in $R[x,y]$. By Lemma \ref{Lemma 2}(ii), the map $f^{\ast}:R\times R\rightarrow R$ given by $(a,b)\mapsto a+b-2ab$ is continuous.
The addition of $\mathcal{B}(R)$ is the restriction of $f^{\ast}$ and so it is continuous.
\end{proof}

\begin{remark}\label{Remark 3-three 2025} (correcting a mathematical mistake on Wikipedia pages) Let $G$ be a topological group. Then it can be easily seen that the normal subgroup $G_{0}$, the identity component of $G$ (i.e., the connected component of $G$ containing the identity element of $G$), is stable under every continuous automorphism of $G$. But in the Wikipedia pages: $$https://en.wikipedia.org/wiki/Identity_{-}component$$

and also in the following link: 

$$https://en.wikipedia.org/wiki/
Characteristic_{-}subgroup$$

it is claimed that: ``the identity component of a topological group is always a characteristic subgroup." But this is a wrong statement. For example, let $H$ be a connected topological group which is nontrivial (e.g. the additive group of real numbers with the Euclidean topology), and take $H_d$ to be the same group $H$ with the discrete topology. Then the identity component of the topological group $G= H\times H_d$ is $G_{0}= H \times \{e\}$ where $e$ is the identity element of $H_d$. Now consider the (non-continuous) automorphism $f: G\rightarrow G$ defined by $(x,y)\mapsto(y,x)$. Then $f(G_{0})= \{e\} \times H$ which is not contained in $G_{0}$. Hence, $G_{0}$ is not a characteristic subgroup of $G$.   
\end{remark}

\begin{lemma}\label{lemma 5 v} Let $R$ be a topological ring. If $C$ is the connected component of $R$ containing the zero element, then $C$ is an ideal of $R$ and the topological ring $R/C$ is the space of connected components of $R$.
\end{lemma}

\begin{proof} See \cite[Theorem 4.5]{Warner}. 
\end{proof}

\begin{lemma}\label{Lemma 3 iii} Let $I$ be an ideal of a ring $R$. Consider the $I$-adic topology over $R$, then we have: \\
$\mathbf{(i)}$ If $S$ is a subset of $R$, then $\overline{S}=\bigcap\limits_{n\geqslant1}(S+I^{n})$. \\
$\mathbf{(ii)}$ $R$ is a discrete space if and only if $R$ has an isolated point, or equivalently, $I$ is a nilpotent ideal.
\end{lemma}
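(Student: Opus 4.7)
The plan is to work directly from the definition of the $I$-adic topology, whose basic open sets are the cosets $a+I^{n}$ with $a\in R$ and $n\geqslant1$. A preliminary observation that will be used throughout is that, since each $I^{n}$ is an additive subgroup of $R$, a basic open set $b+I^{n}$ contains a point $x$ if and only if $x-b\in I^{n}$, in which case $b+I^{n}=x+I^{n}$. Consequently the basic open neighborhoods of $x$ are precisely the cosets $x+I^{n}$ for $n\geqslant1$.

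For part (i), I would compute $\overline{S}$ pointwise using the above description of a neighborhood base. A point $x\in R$ lies in $\overline{S}$ iff $(x+I^{n})\cap S\neq\emptyset$ for every $n\geqslant1$. Such a nonempty intersection provides $s\in S$ and $y\in I^{n}$ with $x+y=s$; using that $I^{n}$ is closed under negation we get $x=s-y\in S+I^{n}$. The converse is identical. Intersecting over all $n$ yields $\overline{S}=\bigcap_{n\geqslant1}(S+I^{n})$.

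For part (ii), I would organize the argument as a cycle of three implications. Discrete $\Rightarrow$ isolated point is immediate. For isolated point $\Rightarrow$ $I$ nilpotent, if $\{a\}$ is open then a basic neighborhood of $a$ must sit inside $\{a\}$, so by the preliminary observation there exists $n$ with $a+I^{n}\subseteq\{a\}$; this forces $I^{n}=\{0\}$, i.e., $I$ is nilpotent. Finally, if $I^{n}=0$ then for every $a\in R$ the singleton $\{a\}=a+I^{n}$ is a basic open set, so the topology is discrete.

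There is no real obstacle here: both statements reduce to unwinding the definition of the basic neighborhoods of the $I$-adic topology and exploiting that each $I^{n}$ is an additive subgroup of $R$. The only point that requires mild care is the identification of a basic open containing $x$ with the canonical coset $x+I^{n}$, which makes both the closure formula and the nilpotency argument transparent.
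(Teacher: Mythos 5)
Your proof is correct and fills in exactly the routine verification that the paper dismisses with ``It is straightforward'': the key observation that any basic open set containing $x$ equals $x+I^{n}$, the pointwise closure computation, and the three-implication cycle for (ii) are all sound. No issues.
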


\begin{proof} It is straightforward.
\end{proof}

\begin{theorem}\label{Coro 1 excellent} Let $I$ be an ideal of a ring $R$. Consider the $I$-adic topology over $R$, then the topological ring $R/(\bigcap\limits_{n\geqslant1}I^{n})$ is the space of connected components of $R$.
\end{theorem}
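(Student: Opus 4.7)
The plan is to reduce to Theorem \ref{lemma 5 v} by identifying the connected component $C$ of $0\in R$ with the ideal $J:=\bigcap_{n\geqslant1}I^{n}$. Once $C=J$ is established, Theorem \ref{lemma 5 v} immediately gives that $R/J$, with its quotient topology, is the space of connected components of $R$.

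For the inclusion $C\subseteq J$, I would first observe that each $I^{n}$ is clopen in the $I$-adic topology. It is open because $I^{n}=0+I^{n}$ is a basic open set. It is closed because its complement is $\bigcup_{a\notin I^{n}}(a+I^{n})$, a union of basic open cosets (disjoint from $I^{n}$ by the usual coset dichotomy). Hence $I^{n}$ is a clopen neighborhood of $0$, so the connected component $C$ of $0$ must be contained in $I^{n}$ for every $n\geqslant1$. Intersecting over $n$ gives $C\subseteq J$.

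For the reverse inclusion $J\subseteq C$, I would use Lemma \ref{Lemma 3 iii}(i) applied to $S=\{0\}$, which yields $\overline{\{0\}}=\bigcap_{n\geqslant1}(0+I^{n})=J$. Since $\{0\}$ is connected and the closure of a connected subset of any topological space is connected (a standard general-topology fact that I would include in one line: if $\overline{\{0\}}=U\sqcup V$ were a clopen decomposition in the subspace topology, the point $0$ would lie in one of them, say $U$, and then $V$ would be a nonempty open set of $\overline{\{0\}}$ disjoint from $\{0\}$, contradicting $V\subseteq\overline{\{0\}}$), the set $J$ is a connected subset of $R$ containing $0$. By definition of connected component, $J\subseteq C$.

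Combining the two inclusions gives $C=J=\bigcap_{n\geqslant1}I^{n}$, and the statement follows from Theorem \ref{lemma 5 v}. I don't expect a serious obstacle: the only nontrivial ingredient is the observation that the powers $I^{n}$ are clopen in the $I$-adic topology, which is an easy consequence of the fact that the cosets $a+I^{n}$ form a basis of the topology; everything else is bookkeeping using results already established in the paper.
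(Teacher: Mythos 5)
Your proof is correct and follows essentially the same route as the paper: both arguments identify the connected component $C$ of $0$ with $\bigcap_{n\geqslant1}I^{n}$ using the clopen-ness of the powers $I^{n}$, and then conclude by Theorem \ref{lemma 5 v}. The only (harmless) difference is in the reverse inclusion, which the paper extracts from $C=\overline{C}=\bigcap_{n\geqslant1}(C+I^{n})=\bigcap_{n\geqslant1}I^{n}$ via Lemma \ref{Lemma 3 iii}(i), whereas you observe that $\bigcap_{n\geqslant1}I^{n}=\overline{\{0\}}$ is connected as the closure of a connected set.
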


\begin{proof} Let $J\subseteq R$ be the identity component of $R$ (i.e., the connected component of $R$ containing the zero element). Then $J$ is an ideal of $R$ (see Lemma \ref{lemma 5 v}). We know that every connected component is a closed subset. Then using Lemma \ref{Lemma 3 iii}(i) we have  $J=\overline{J}=\bigcap\limits_{n\geqslant1}(J+I^{n})$. For each $d\geqslant1$, the ideal $I^{d}$ is a base open. It is also a closed subset, because $\overline{I^{d}}=\bigcap\limits_{n\geqslant1}(I^{d}+I^{n})
=I^{d}$ (the closedness of $I^{d}$ also follows from the fact that in a topological group, every open subgroup is closed). We know that in a topological space, a connected subset is contained in a clopen (both open and closed subset) if and only if they meet each other.
Since $0\in J\cap I^{n}$, thus $J\subseteq I^{n}$ and so $J+I^{n}=I^{n}$ for all $n\geqslant 1$. It follows that $J=\bigcap\limits_{n\geqslant1}I^{n}$. Then the assertion follows from Lemma \ref{lemma 5 v}.
\end{proof}

\begin{corollary} Let $I$ be an ideal of a ring $R$. Then $I$ is a connected subset of $R$ with respect to the $I$-adic topology if and only if $I$ is an idempotent ideal. In this case, $R/I$ is the space of connected components of $R$.
\end{corollary}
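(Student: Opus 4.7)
The plan is to reduce the whole statement to Theorem \ref{Coro 1 excellent}, which identifies the connected component $J$ of the zero element of $R$ in the $I$-adic topology with $\bigcap_{n\geqslant1}I^{n}$. Once this identification is in hand, both directions and the final quotient claim fall out almost mechanically.

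For the forward implication, I would suppose $I$ is connected as a subspace of $R$. Since $0\in I$, the maximality of the connected component containing $0$ forces $I\subseteq J=\bigcap_{n\geqslant1}I^{n}$. In particular $I\subseteq I^{2}$, and since $I^{2}\subseteq I$ is automatic for an ideal, we obtain $I=I^{2}$. For the reverse implication, assume $I$ is idempotent; then a trivial induction gives $I^{n}=I$ for every $n\geqslant1$, so $J=\bigcap_{n\geqslant1}I^{n}=I$. By Theorem \ref{Coro 1 excellent}, $I=J$ is the connected component of $0$, which is of course a connected subset of $R$.

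For the final clause, under the hypothesis $I=I^{2}$ we already have $I=\bigcap_{n\geqslant1}I^{n}$, so $R/I=R/\bigcap_{n\geqslant1}I^{n}$, and another application of Theorem \ref{Coro 1 excellent} identifies this quotient with the space of connected components of $R$. There is no real obstacle here; the only thing to be slightly careful about is to invoke the maximality property of connected components (a connected set containing a point lies in its connected component), which is standard point-set topology.
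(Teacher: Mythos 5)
Your proposal is correct and follows essentially the same route as the paper: both directions reduce to Theorem \ref{Coro 1 excellent}'s identification of the connected component of $0$ with $\bigcap_{n\geqslant1}I^{n}$, with the forward implication using maximality of components and the chain $\bigcap_{n\geqslant1}I^{n}\subseteq I^{2}\subseteq I$. The only cosmetic difference is that the paper justifies connectedness in the converse by noting $\bigcap_{n\geqslant1}I^{n}=\overline{\{0\}}$, whereas you appeal directly to the fact that a connected component is connected; both are valid.
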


\begin{proof} If $I$ is connected then it is contained in the connected component of the zero element which is $\bigcap\limits_{n\geqslant1}I^{n}$ by the above result. But $\bigcap\limits_{n\geqslant1}I^{n}\subseteq I^{2}$ and so $I=I^{2}$. Conversely, if $I$ is an idempotent ideal then $I=\bigcap\limits_{n\geqslant1}I^{n}=\overline{\{0\}}$. Thus $I$ is connected.
\end{proof}

The above result, in particular, tells us that if the ideal $I$ is generated by a set of idempotents or more generally it is a pure ideal (i.e., the canonical ring map $R\rightarrow R/I$ is a flat ring map), then $I$ is a connected component of $R$ with respect to the $I$-adic topology. \\

If $I$ is a proper ideal of a ring $R$, then by Theorem \ref{Coro 1 excellent}, $R$ is not connected with respect to the $I$-adic topology.

\begin{corollary} Let $I$ be an ideal of a ring $R$. Consider the $I$-adic topology over $R$, then the following assertions are equivalent. \\
$\mathbf{(i)}$ $R$ is Hausdorff. \\
$\mathbf{(ii)}$ $\bigcap\limits_{n\geqslant1}I^{n}=0$. \\
$\mathbf{(iii)}$ $R$ is totally disconnected. \\
$\mathbf{(iv)}$ $R$ has a connected component which is singleton.
\end{corollary}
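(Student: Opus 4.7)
The plan is to prove the cycle (i) $\Leftrightarrow$ (ii) $\Rightarrow$ (iii) $\Rightarrow$ (iv) $\Rightarrow$ (ii), invoking homogeneity of the additive group throughout. All four conditions concern what happens at the point $0$; homogeneity lets me transport such statements to every point, and the two nontrivial facts I need have already been recorded.

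First I would handle (i) $\Leftrightarrow$ (ii). The additive group $(R,+)$ with the $I$-adic topology is a topological group (this is the additive part of Theorem \ref{Theorem 3}). A topological group is Hausdorff if and only if the identity element is closed, so $R$ is Hausdorff if and only if $\overline{\{0\}}=\{0\}$. Applying Lemma \ref{Lemma 3 iii}(i) with $S=\{0\}$ gives $\overline{\{0\}}=\bigcap_{n\geqslant 1}(0+I^{n})=\bigcap_{n\geqslant 1}I^{n}$, so (i) and (ii) are equivalent.

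For (ii) $\Rightarrow$ (iii), Theorem \ref{Coro 1 excellent} says the connected component of $0$ equals $\bigcap_{n\geqslant 1}I^{n}$; under (ii) this is $\{0\}$. For every $a\in R$ the translation $x\mapsto x+a$ is a homeomorphism and permutes connected components, so the component of $a$ is $\{a\}$, giving (iii). The step (iii) $\Rightarrow$ (iv) is immediate since $R$ is nonempty. For (iv) $\Rightarrow$ (ii), if $\{a\}$ is a connected component of $R$ then translating by $-a$ (again a homeomorphism) shows $\{0\}$ is a connected component, and Theorem \ref{Coro 1 excellent} then forces $\bigcap_{n\geqslant 1}I^{n}=0$.

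I do not foresee any real obstacle: each implication reduces to a single appeal to either Lemma \ref{Lemma 3 iii}(i), Theorem \ref{Coro 1 excellent}, or the standard fact that a topological group is Hausdorff exactly when $\{e\}$ is closed. The only minor care needed is to note that the translations $x\mapsto x+a$—not an ideal-theoretic operation but a purely topological-group one—are what upgrade statements about the component of $0$ to statements about every component.
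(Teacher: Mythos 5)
Your proof is correct and follows essentially the same route as the paper: the equivalence (i)$\Leftrightarrow$(ii) is standard (the paper simply cites it as well known, while you derive it from Lemma \ref{Lemma 3 iii}(i) and the fact that a topological group is Hausdorff iff $\{e\}$ is closed), and the cycle (ii)$\Rightarrow$(iii)$\Rightarrow$(iv)$\Rightarrow$(ii) rests, exactly as in the paper, on Theorem \ref{Coro 1 excellent} identifying the connected components with the cosets of $\bigcap_{n\geqslant1}I^{n}$. Your explicit use of translations to move between the component of $0$ and the component of an arbitrary point is just an unwinding of the paper's statement that the components are precisely the cosets $a+\bigcap_{n\geqslant1}I^{n}$.
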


\begin{proof} (i)$\Leftrightarrow$(ii): Well known. \\
(ii)$\Rightarrow$(iii): If $\bigcap\limits_{n\geqslant1}I^{n}=0$ then by Theorem \ref{Coro 1 excellent}, $R$ is the space of connected components of $R$. In other words, every connected component of $R$ is singleton. \\
(iii)$\Rightarrow$(iv): There is nothing to prove, because $R$ is nonempty. \\
(iv)$\Rightarrow$(ii): By hypothesis and Theorem \ref{Coro 1 excellent}, we have $\{a\}=a+\bigcap\limits_{n\geqslant1}I^{n}$ for some $a\in R$. It follows that $\bigcap\limits_{n\geqslant1}I^{n}=0$.
\end{proof}

\begin{corollary}\label{Coro 2 ex} Let $I$ be an ideal of a ring $R$ and $x\in R$. Consider the $I$-adic topology over $R$, then $\overline{\{x\}}=x+\bigcap\limits_{n\geqslant1}I^{n}$.
\end{corollary}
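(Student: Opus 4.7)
The plan is to derive this directly from Lemma \ref{Lemma 3 iii}(i), which gives a general formula for the closure of any subset $S$ of $R$ under the $I$-adic topology, namely $\overline{S}=\bigcap_{n\geqslant1}(S+I^{n})$. So the work reduces to specializing to the singleton $S=\{x\}$ and simplifying.

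First I would apply Lemma \ref{Lemma 3 iii}(i) with $S=\{x\}$, yielding $\overline{\{x\}}=\bigcap_{n\geqslant1}(x+I^{n})$. Then I would show the set-theoretic identity $\bigcap_{n\geqslant1}(x+I^{n})=x+\bigcap_{n\geqslant1}I^{n}$. The inclusion $\supseteq$ is immediate: any element $x+a$ with $a\in\bigcap_{n}I^{n}$ lies in $x+I^{n}$ for every $n$. For $\subseteq$, take $y\in\bigcap_{n}(x+I^{n})$; then for each $n$ there is some $a_{n}\in I^{n}$ with $y=x+a_{n}$, hence $y-x=a_{n}\in I^{n}$ for every $n$, so $y-x\in\bigcap_{n}I^{n}$ and therefore $y\in x+\bigcap_{n}I^{n}$.

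There is really no obstacle here; the statement is a formal consequence of Lemma \ref{Lemma 3 iii}(i) combined with the trivial distributivity of translation over intersection in an abelian group. The only thing worth remarking is that translation $r\mapsto x+r$ is a bijection of $R$ (in fact a homeomorphism in the $I$-adic topology), which is what underlies the step $\bigcap_{n}(x+I^{n})=x+\bigcap_{n}I^{n}$; one could alternatively observe that $\overline{\{x\}}=x+\overline{\{0\}}$ by continuity of translation, and then apply Lemma \ref{Lemma 3 iii}(i) to $S=\{0\}$ to compute $\overline{\{0\}}=\bigcap_{n\geqslant1}I^{n}$ directly.
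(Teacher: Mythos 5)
Your proof is correct, but it follows a different route from the paper's. You derive the statement purely from Lemma \ref{Lemma 3 iii}(i) by specializing to $S=\{x\}$ and then using the (correct) set-theoretic identity $\bigcap_{n\geqslant1}(x+I^{n})=x+\bigcap_{n\geqslant1}I^{n}$, which holds because translation by $x$ is a bijection distributing over intersections. The paper instead proves the inclusion $\overline{\{x\}}\subseteq x+\bigcap_{n\geqslant1}I^{n}$ by invoking Theorem \ref{Coro 1 excellent}: the coset $x+\bigcap_{n\geqslant1}I^{n}$ is the connected component of $x$, hence contains the connected set $\overline{\{x\}}$; the reverse inclusion is then checked directly by observing that any basic open neighborhood $x+I^{d}$ of $x+y$ (with $y\in\bigcap_{n}I^{n}$) contains $x$. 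Your argument is more elementary and self-contained --- it needs only the closure formula, whose proof the paper leaves as ``straightforward'' --- whereas the paper's proof showcases the connectedness machinery it has just built (and makes the corollary read as a byproduct of Theorem \ref{Coro 1 excellent}). Your closing remark that $\overline{\{x\}}=x+\overline{\{0\}}$ by the translation homeomorphism, combined with $\overline{\{0\}}=\bigcap_{n\geqslant1}I^{n}$, is also a clean and valid third formulation of the same computation.
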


\begin{proof} By Theorem \ref{Coro 1 excellent}, the subset $x+\bigcap\limits_{n\geqslant1}I^{n}$ is the connected component of $x$. Hence, it contains the connected subset $\overline{\{x\}}$. To see the reverse inclusion, take $y\in\bigcap\limits_{n\geqslant1}I^{n}$. If $U\subseteq R$ is an open neighborhood of $x+y$ then $x+y\in x+y+I^{d}=x+I^{d}\subseteq U$ for some $d\geqslant1$. It follows that $x\in U$. Hence, $x+y\in\overline{\{x\}}$. Thus $\overline{\{x\}}=x+\bigcap\limits_{n\geqslant1}I^{n}$.
\end{proof}

Recall from \cite[Chap II, \S2, p. 78]{Hartshorne} that if $X$ is a topological space, then by $t(X)$ we mean the set of all irreducible and closed subsets of $X$. It can be easily seen that the set $t(X)$ is a topological space whose closed subsets are precisely of the form $t(E)$ where $E$ is a closed subset of $X$. The canonical map $X\rightarrow t(X)$ given by $x\mapsto\overline{\{x\}}$ is  continuous. If $f:X\rightarrow Y$ is a continuous map of topological spaces, then the map $t(f):t(X)\rightarrow t(Y)$ given by $Z\mapsto\overline{f(Z)}$ is continuous. In fact, $t(-)$ is a covariant functor from the category of topological spaces to itself. In this regard, we have the following result.

\begin{theorem} Let $I$ be an ideal of a ring $R$. Consider the $I$-adic topology over $R$, then the topological space $t(R)$ and the quotient space $R/(\bigcap\limits_{n\geqslant1}I^{n})$ are the same.
\end{theorem}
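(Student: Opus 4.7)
The plan is to construct an explicit homeomorphism $\bar\phi:R/J \to t(R)$, where $J=\bigcap_{n\geq 1}I^{n}$. The starting observation, from Corollary \ref{Coro 2 ex}, is that $\overline{\{x\}}=x+J$ for every $x\in R$; in particular each $\overline{\{x\}}$ is an irreducible closed set, so the canonical continuous map $\phi:R\to t(R)$, $x\mapsto \overline{\{x\}}$, is well defined. Since $\phi$ is constant on cosets of $J$, the universal property of the quotient topology gives a continuous map $\bar\phi:R/J\to t(R)$ sending $x+J$ to the set $x+J$ viewed as a point of $t(R)$.

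Next I will show $\bar\phi$ is bijective. Injectivity is immediate from $\overline{\{x\}}=x+J$. For surjectivity, take any $Z\in t(R)$: an irreducible space is connected, so $Z$ lies in a single connected component of $R$, which by Theorem \ref{Coro 1 excellent} has the form $x+J$ for any $x\in Z$. Since $Z$ is closed and contains $x$, we also have $\overline{\{x\}}=x+J\subseteq Z$, so $Z=x+J=\bar\phi(x+J)$. In particular, every irreducible closed subset of $R$ is the closure of a point.

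For the topological statement, I will show $\bar\phi$ is a closed map, which together with continuity and bijectivity finishes the proof. The key ingredient is that every closed set $F\subseteq R$ is $J$-saturated, i.e.\ $F+J=F$: by Lemma \ref{Lemma 3 iii}(i), $F=\overline{F}=\bigcap_{n\geq1}(F+I^{n})$, and since $J\subseteq I^{n}$ for all $n$, adding an element of $J$ to any element of $F$ stays in every $F+I^{n}$, hence in $F$. Now let $C\subseteq R/J$ be closed and set $F=\pi^{-1}(C)$, which is closed and $J$-saturated. Then $\bar\phi(C)=\{x+J:x\in F\}$, and using $J$-saturation of $F$ together with surjectivity of $\bar\phi$, this set coincides with $\{Z\in t(R):Z\subseteq F\}=t(F)$, which is closed in $t(R)$ by definition.

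I expect the main obstacle to be this last step: one has to combine the description of closures in the $I$-adic topology from Lemma \ref{Lemma 3 iii}(i) with the fact that every irreducible closed set is a coset of $J$ in order to reidentify the image $\bar\phi(C)$ with $t(F)$. The continuity and bijectivity steps, by contrast, reduce cleanly to Corollary \ref{Coro 2 ex} and Theorem \ref{Coro 1 excellent} once the right framework is set up.
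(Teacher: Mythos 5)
Your proof is correct and follows essentially the same route as the paper: both identify the points of $t(R)$ with the cosets of $J=\bigcap_{n\geqslant1}I^{n}$ via Theorem \ref{Coro 1 excellent} and Corollary \ref{Coro 2 ex}, and then match the closed sets $t(F)$ with the closed sets of the quotient space. The only cosmetic difference is that you obtain continuity from the universal property of the quotient topology applied to $x\mapsto\overline{\{x\}}$, whereas the paper verifies the same direction by using that the projection $R\rightarrow R/J$ is an open map.
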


\begin{proof} If $Z\in t(R)$ then $Z$ is an irreducible and closed subset of $R$. Since $Z$ is nonempty, we may choose some $x\in Z$ and so $\overline{\{x\}}\subseteq Z$. We know that in a topological space, every irreducible subset is connected. So $Z$ is contained in the connected component of $x$. Then using Theorem \ref{Coro 1 excellent} and Corollary \ref{Coro 2 ex}, we have $Z\subseteq x+\bigcap\limits_{n\geqslant1}I^{n}=\overline{\{x\}}$. Therefore, $Z=x+\bigcap\limits_{n\geqslant1}I^{n}$. This shows that the underlying sets are the same, i.e., $t(R)=R/(\bigcap\limits_{n\geqslant1}I^{n})$. Next we show that their topologies are the same. If $\mathscr{C}$ is a closed subset of the quotient space $R/(\bigcap\limits_{n\geqslant1}I^{n})$ then $E:=f^{-1}(\mathscr{C})$ is a closed subset of $R$ where $f:R\rightarrow R/(\bigcap\limits_{n\geqslant1}I^{n})$ is the canonical map. Then clearly $\mathscr{C}=t(E)$. Hence, $\mathscr{C}$ is a closed subset of $t(R)$. To see the reverse inclusion, take a closed subset $t(F)$ in $t(R)$ where $F$ is a closed subset of $R$. We know that if $H$ is a subgroup of a topological group $G$, then the canonical map $\pi$ from $G$ onto the quotient space $G/H$ given by $x\mapsto xH$ is an open map, because for any subset $U\subseteq G$ we have $\pi(U)=UH=\bigcup\limits_{x\in H}Ux$.
Thus $f$ is an open map, and so $f(U)=U+\bigcap\limits_{n\geqslant1}I^{n}$ is an open subset of the quotient space $R/(\bigcap\limits_{n\geqslant1}I^{n})$ where $U=R\setminus F$. But $t(F)=\{x+\bigcap\limits_{n\geqslant1}I^{n}: x\in F\}$ which is the complement of $f(U)$. Hence, $t(F)$ is a closed subset of the quotient space $R/(\bigcap\limits_{n\geqslant1}I^{n})$. This completes the proof.
\end{proof}

\begin{remark} The canonical map $f: X\rightarrow t(X)$ given by $x\mapsto\overline{\{x\}}$ induces a bijection  $W\mapsto f^{-1}(W)$ from
the topology (the set of open subsets) of $t(X)$ to the topology of $X$. We also observe that $f$ is a closed map if and only if every irreducible and closed subset of $X$ has a generic point. Indeed, for any closed subset $E\subseteq X$ we have $f(E)\subseteq t(E)$. If every irreducible closed subset of $X$ has a generic point then $f(E)=t(E)$ and so $f$ is a closed map. Conversely, if $Z$ is an irreducible closed subset of $X$ then $f(Z)=t(E)$ for some closed $E\subseteq X$, it follows that $Z\subseteq E$ thus $Z\in t(E)$ and so $Z=\overline{\{x\}}$ for some $x\in Z$.
Moreover, if $U$ and $V$ are open subsets of $X$ with $f(U)=f(V)$, then $U=V$. The map $E\mapsto t(E)$ is also a bijection from the set of closed subsets of $X$ onto the set of closed subsets of $t(X)$. But in general, $f$ is not an open map. For example, let $X$ be an infinite set equipped with the cofinite topology (i.e., the proper closed subsets of $X$ are the finite subsets). Then $X$ is an irreducible space with no generic point. It is clear that the points of $t(X)$ are precisely $X$ and all of the singletons. Now if $U$ is a nonempty open subset of $X$ then $f(U)$ is not open. Indeed, suppose it is open then $t(X)\setminus f(U)=t(E)$ for some closed subset $E$ of $X$. But $X\in t(E)$ and so $E=X$. It follows that $f(U)$ is the empty set which is a contradiction. Also, for any open subset $U$ in $X$ we have $X\notin f(U)$ and so $f(U)\neq t(X)$.
\end{remark}

By a compact space we mean a quasi-compact and Hausdorff topological space.

\begin{remark} Remember that by a \emph{perfect map} we mean a continuous map $f:X\rightarrow Y$ between topological spaces such that it is a closed map and for each $y\in Y$ the fiber $f^{-1}(y)$ is quasi-compact. For example, every continuous map from a quasi-compact space into a Hausdorff space is a perfect map.
It is well-known and easy to check that the inverse image of every quasi-compact subset under a perfect map is quasi-compact.
\end{remark}

We need the following well-known and fundamental result in the next theorem.

\begin{theorem}\label{Theorem 5 five} For a topological group $G$ the following assertions hold. \\
$\mathbf{(i)}$ If $S$ is a subset of $G$ then $\overline{S}=\bigcap\limits_{U\in\mathcal{N}(e)}SU=
\bigcap\limits_{U\in\mathcal{N}(e)}\overline{SU}$ where $\mathcal{N}(e)$ denotes the set of open neighborhoods of the identity element $e\in G$.\\
$\mathbf{(ii)}$ If $E$ is a closed subset of $G$ and $K$ is a quasi-compact subset of $G$, then $EK$ and $KE$ are closed subsets of $G$. \\
$\mathbf{(iii)}$ If $H$ is a quasi-compact subgroup of $G$ then the canonical map $f:G\rightarrow G/H$ given by $x\mapsto xH$ is a perfect map where the set $G/H$ is equipped with the quotient topology. In this case, $G$ is quasi-compact if and only if $G/H$ is quasi-compact.
\end{theorem}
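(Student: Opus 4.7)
The plan is to handle the three parts in order, all by standard topological-group reasoning; the continuity of multiplication at $(e,e)$, the ability to shrink neighbourhoods, and the earlier observation that left/right translation is a homeomorphism will do most of the work.

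For $\mathbf{(i)}$, I first translate the closure condition into neighbourhood language: $x\in\overline{S}$ iff every basic neighbourhood $xU$ of $x$ with $U\in\mathcal{N}(e)$ meets $S$, i.e.\ $xu=s$ for some $s\in S$, $u\in U$, which rearranges to $x=su^{-1}\in SU^{-1}$. Since $U\mapsto U^{-1}$ permutes $\mathcal{N}(e)$, this gives $\overline{S}=\bigcap_{U}SU$. The inclusion $\bigcap_{U}SU\subseteq\bigcap_{U}\overline{SU}$ is immediate. For the reverse, given $V\in\mathcal{N}(e)$ I pick $U\in\mathcal{N}(e)$ with $UU\subseteq V$ (possible by continuity of multiplication at $(e,e)$) and apply the formula just established to the set $SU$: any $x\in\overline{SU}$ lies in $(SU)\cdot U=SUU\subseteq SV$. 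Hence $\bigcap_{U}\overline{SU}\subseteq SV$ for every $V$, and intersecting over $V$ recovers $\bigcap_{V}SV=\overline{S}$.

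For $\mathbf{(ii)}$, I will show $G\setminus EK$ is open. If $x\notin EK$, then $xk^{-1}\notin E$ for every $k\in K$; since the map $(y,z)\mapsto yz^{-1}$ is continuous and $G\setminus E$ is open, for each $k\in K$ I can choose open neighbourhoods $V_{k}\ni x$ and $W_{k}\ni k$ with $V_{k}W_{k}^{-1}\subseteq G\setminus E$. The cover $\{W_{k}:k\in K\}$ admits a finite subcover $W_{k_{1}},\ldots,W_{k_{n}}$ by quasi-compactness of $K$, and $V:=\bigcap_{i=1}^{n}V_{k_{i}}$ is an open neighbourhood of $x$ satisfying $VK^{-1}\cap E=\emptyset$, hence $V\cap EK=\emptyset$. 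The symmetric argument (exchanging left and right translations) handles $KE$.

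For $\mathbf{(iii)}$, continuity of $f$ is built into the quotient topology, and each fibre $f^{-1}(xH)=xH$ is quasi-compact as the image of $H$ under the homeomorphism $g\mapsto xg$. To see $f$ is closed, observe that for closed $E\subseteq G$ we have $f^{-1}(f(E))=EH$, which is closed by part $\mathbf{(ii)}$; by the defining property of the quotient topology, $f(E)$ is therefore closed in $G/H$. Thus $f$ is perfect. The forward direction $G$ quasi-compact $\Rightarrow G/H$ quasi-compact is the continuous image of a quasi-compact space; the reverse follows from the general fact recalled just before the theorem, namely that preimages of quasi-compact sets under perfect maps are quasi-compact, applied to $G=f^{-1}(G/H)$. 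The main obstacle I foresee is the second equality in $\mathbf{(i)}$, which depends on the $UU\subseteq V$ shrinking trick; once that identity is in hand, $\mathbf{(ii)}$ is a tube-lemma style argument and $\mathbf{(iii)}$ is essentially bookkeeping on top of $\mathbf{(ii)}$ and the definitions of quotient and perfect maps.
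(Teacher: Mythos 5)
Your proof is correct, and where the paper actually argues (part (iii): $f^{-1}(f(E))=EH$, closedness via (ii), fibres homeomorphic to $H$, and the perfect-map fact for the converse) your reasoning coincides with it exactly; for (i) and (ii) the paper simply cites Hofmann's notes, and your neighbourhood computation $\overline{S}=\bigcap_U SU$ with the $UU\subseteq V$ shrinking step and your tube-lemma argument are the standard proofs that reference supplies, so you have only made explicit what the paper outsources. The one cosmetic difference is that the paper deduces the closedness of $KE$ from that of $EK$ via $KE=(E^{-1}K^{-1})^{-1}$, whereas you rerun the symmetric argument; both are fine.
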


\begin{proof} (i) and (ii): Well known, see e.g. Karl Hofmann's notes entitled: Introduction to Topological Groups, Lemma 1.15. Note that $E^{-1}$ is a closed subset of $G$ and $K^{-1}$ is a quasi-compact subset of $G$, thus $E^{-1}K^{-1}$  and so $KE=(E^{-1}K^{-1})^{-1}$ are closed subsets of $G$. \\
(iii): It is also well known. Indeed, for any subset $E\subseteq G$ we have $f^{-1}\big(f(E)\big)=EH$. Thus by
(ii), $f$ is a closed map. Each fiber of $f$ is of the form $xH$ which is homeomorphic to $H$ and hence it is quasi-compact.
\end{proof}

Note that the converse of Theorem \ref{Theorem 5 five}(iii) holds trivially: If the canonical map $G\rightarrow G/H$ is a perfect map for some subgroup $H$, then $H$ is quasi-compact.

\begin{remark} Recall that if $f:G\rightarrow H$ is a surjective morphism of topological groups (i.e. a surjective and continuous map of group morphisms), then the induced map $G/N\rightarrow H$ with $N=\Ker(f)$ is an isomorphism (homeomorphism) of topological groups if and only if $f$ is an open map. If $f$ is a closed map then the induced map $G/N\rightarrow H$ is also an isomorphism.
\end{remark}

By $Z(R)=\{a\in R: \Ann(a)\neq0\}$ we mean the set of all zerodivisors of a ring $R$.  \\

The main result of Koh \cite{kwangil} is not true. Indeed, in a given topological ring $R$, the canonical bijective continuous map from the quotient space $R/\Ann(x)$ onto the subspace $Rx$ given by $r+\Ann(x)\mapsto rx$ with $x\in R$ is not necessarily a homeomorphism, even if $Rx$ (or more strongly, every principal ideal of $R$) is a closed subset of $R$. An example can be found in \cite[Chap II, \S12, Remark 12.1]{Ursul}. In the following result, we correct Koh's result in the right way.

\begin{theorem}\label{Theorem 4 iv} Let $R$ be a topological ring which is  Hausdorff and the map $f:R\rightarrow R$ given by $r\mapsto rx $ is a closed map
for some $0\neq x\in Z(R)$. If $Z(R)$ is a compact subset, then $R$ is compact.
\end{theorem}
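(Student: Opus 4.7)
The plan is to show that $R/\Ann(x)$ is compact and that $\Ann(x)$ is a compact subgroup of the additive topological group $R$, and then invoke Theorem \ref{Theorem 5 five}(iii) with $G=R$ and $H=\Ann(x)$ to conclude that $R$ is quasi-compact; together with the Hausdorff hypothesis this gives compactness.

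First I would observe that the map $f:R\to R$, $r\mapsto rx$, is an additive group homomorphism (using distributivity), so its kernel is precisely $\Ann(x)$ and its image is $Rx$. Because $R$ is Hausdorff, $\{0\}$ is closed, so $\Ann(x)=f^{-1}(\{0\})$ is a closed subgroup. The key intermediate step is to show that both $Rx$ and $\Ann(x)$ are contained in $Z(R)$: since $x\in Z(R)$, pick $0\neq y\in\Ann(x)$; then for every $r\in R$, commutativity gives $y(rx)=r(yx)=0$, so $0\neq y\in\Ann(rx)$ and hence $Rx\subseteq Z(R)$. For $r\in\Ann(x)$ we have $rx=0$ so $x\in\Ann(r)$, and $x\neq 0$ forces $r\in Z(R)$. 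Since $Z(R)$ is compact in the Hausdorff space $R$, it is closed; consequently $\Ann(x)$ (closed in $R$, contained in $Z(R)$) is compact, and $Rx=f(R)$, which is closed in $R$ by the closed-map hypothesis on $f$ and also contained in $Z(R)$, is likewise compact.

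Next I would apply the remark immediately preceding the theorem: $f$ factors as a continuous bijective additive group morphism $R/\Ann(x)\to Rx$, and since $f$ is a closed map, the induced map is a topological group isomorphism. Therefore $R/\Ann(x)$ is compact. Now Theorem \ref{Theorem 5 five}(iii), applied to the compact subgroup $H=\Ann(x)$ of the additive topological group $R$, yields that $R$ is quasi-compact if and only if $R/\Ann(x)$ is quasi-compact; we have just shown the latter, so $R$ is quasi-compact. Being Hausdorff by hypothesis, $R$ is compact.

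The main obstacle is the inclusion $Rx\subseteq Z(R)$, which is where the choice $x\in Z(R)$ (rather than merely $x\neq 0$) is crucially exploited via a fixed nonzero annihilator $y$ of $x$; everything else — closedness of $\Ann(x)$, compactness transfer through the quotient, and the perfect-map criterion — is a direct reading of the results already available in the excerpt.
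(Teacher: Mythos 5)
Your proposal is correct and follows essentially the same route as the paper: factor $f$ through a homeomorphism $R/\Ann(x)\to Rx$ (using that $f$ is closed), deduce quasi-compactness of $Rx$ and of $\Ann(x)$ from their containment in the compact set $Z(R)$, and apply Theorem \ref{Theorem 5 five}(iii) to the additive group with $H=\Ann(x)$. Your justification of $Rx\subseteq Z(R)$ via a fixed nonzero $y\in\Ann(x)$ is in fact slightly more careful than the paper's.
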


\begin{proof} The induced map $g:R/\Ker(f)\rightarrow Rx$ given by $r+\Ker(f)\mapsto rx$ is bijective and continuous. It is also a closed map, because $f$ is a closed map. Hence, $g$ is a homeomorphism from the quotient space $R/\Ker(f)$ onto the subspace $Rx$. Since $x\neq0$, so $Rx\subseteq Z(R)$. Clearly $Rx$ is a closed subset of $R$, since $f$ is a closed map.
Thus $Rx$ is quasi-compact, because every closed subset of a quasi-compact space is quasi-compact. Hence, the quotient space $R/\Ker(f)$ is quasi-compact. Since $R$ is Hausdorff, so the zero ideal is a closed point. Thus the fiber $f^{-1}(0)=\Ker(f)$ is also a closed subset of $R$.  Also $\Ker(f)=\Ann(x)\subseteq Z(R)$, since $x\neq0$. Hence, $\Ker(f)$ is quasi-compact. We know that the additive group of every topological ring is a topological group. Thus by Theorem \ref{Theorem 5 five}(iii), $R$ is quasi-compact.
\end{proof}

Note that in the above result, $\Ker(f)$ is a closed subset of $R$ if and only if $R$ is Hausdorff. Because if $\Ker(f)$ is closed then its image under the closed map $f$ is a closed subset which equals to the zero ideal, and so $R$ is Hausdorff (for the reverse implication see the above proof).
Hence, a corrected version of Koh's result \cite[Chap II, \S12, Theorem 12.1]{Ursul} is not true without the ``Hausdorffness" assumption. Also note that in Theorem \ref{Theorem 4 iv}, $f$ is a closed map if and only if the induced map $R/\Ker(f)\rightarrow R$ is a closed map. Indeed, by Theorem \ref{Theorem 5 five}(iii), the canonical map $R\rightarrow R/\Ker(f)$ is a closed map.

\begin{remark}\label{Remark 4 iv} Recall from the basic group theory that if $I$ is an ideal of a ring $R$ such that $I$ and $R/I$ are finite sets then $R$ is a finite ring with $|R|=|I|\cdot|R/I|$.
\end{remark}

The following result improves \cite[Theorem I]{Ganesan}.

\begin{theorem}\label{Theorem 7 seven} Let $R$ be a nonzero ring. Then $R$ is a finite nonfield ring if and only if $Z(R)$ is a finite nonzero set.
\end{theorem}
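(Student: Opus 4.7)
My plan is to handle the two directions separately. For the forward direction, I would assume $R$ is finite and not a field. Since $Z(R) \subseteq R$, the set $Z(R)$ is automatically finite; I then need only show $Z(R) \neq \{0\}$. For this I would invoke the standard fact that in a finite commutative ring every element is either a unit or a zerodivisor: if $a \in R$ is a non-unit then the multiplication-by-$a$ endomorphism of $R$ is not surjective, hence not injective by finiteness, so there is $0 \neq b \in R$ with $ab = 0$. Since $R$ is not a field, it contains a nonzero non-unit, which is therefore a nonzero zerodivisor, giving $Z(R) \neq \{0\}$.

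For the reverse direction, my plan is to apply Theorem~\ref{Theorem 4 iv} to $R$ equipped with the discrete topology. Under the discrete topology $R$ becomes a Hausdorff topological ring, every map $R \to R$ is automatically a closed map, and the finite subset $Z(R)$ is quasi-compact (hence compact, being Hausdorff). Picking any $0 \neq x \in Z(R)$ (available by the ``nonzero set'' hypothesis), the multiplication map $f:R \to R$, $r \mapsto rx$, is closed. Theorem~\ref{Theorem 4 iv} then gives that $R$ is compact; since the topology is discrete, $R$ must be finite. Finally, $R$ is not a field, because $x$ is by construction a nonzero zerodivisor.

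The key insight — and, I expect, the main conceptual step — is noticing that the topological correction of Koh's result in Theorem~\ref{Theorem 4 iv} specializes, in the discrete case, precisely to the algebraic strengthening of Ganesan's theorem; once this is seen both directions reduce to short bookkeeping. A direct algebraic alternative (which would still be useful to record as a sanity check) is to bypass topology: for nonzero $x \in Z(R)$ one checks $\Ann(x) \subseteq Z(R)$ and $Rx \subseteq Z(R) \cup \{0\}$, so both $\Ann(x)$ and $Rx$ are finite, and then $R/\Ann(x) \cong Rx$ combined with Remark~\ref{Remark 4 iv} forces $R$ to be finite.
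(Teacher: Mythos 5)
Your proposal is correct and follows essentially the same route as the paper: the forward direction via the fact that a finite domain is a field, and the reverse direction by applying Theorem~\ref{Theorem 4 iv} to $R$ with the discrete topology (the paper even records your ``direct algebraic alternative'' via $R/\Ann(x)\hookrightarrow Z(R)$ as its second proof). No gaps.
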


\begin{proof} The implication ``$\Rightarrow$" is clear, because if $Z(R)=\{0\}$ then $R$ will be an integral domain which is a contradiction since every finite integral domain is a field. Conversely, suppose $Z(R)$ is a finite nonzero set. Consider the discrete topology over $R$. Then by Theorem \ref{Theorem 4 iv}, $R$ is compact and so it is finite. Also $R$ is not a field, because $Z(R)\neq0$. \\
Motivated by the proof of \cite[Theorem I]{Ganesan}, we provide a second proof for the reverse implication without using Theorem \ref{Theorem 4 iv}. Assume $Z(R)$ is a finite nonzero set. So we may choose some $0\neq x\in Z(R)$. Then clearly $I:=\Ann_{R}(x)\subseteq Z(R)$. Hence, $I$ is a finite set. The map $R/I\rightarrow Z(R)$ given by $r+I\mapsto rx$ is injective. Thus $R/I$ is also a finite set. Then by Remark \ref{Remark 4 iv}, $R$ is a finite ring. Moreover,  $|R|=|I|\cdot|R/I|\leqslant n^{2}$ where $n:=|Z(R)|$.
\end{proof}

Note that in the above result, the assumption $Z(R)\neq0$ is vital. For example, the ring of integers $\mathbb{Z}$ has finitely many zerodivisors (the zero element is the only zerodivisor), but it is an infinite ring.

\begin{theorem} Let $f:R\rightarrow R'$ be a morphism of rings, $I$ an ideal of $R$ and $J$ an ideal of $R'$. Then $f$ is continuous with respect to the corresponding $I$-adic and $J$-adic topologies if and only if $f(I^{n})\subseteq J$ for some $n\geqslant1$.
\end{theorem}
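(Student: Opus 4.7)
The plan is to treat the two directions separately, and in both cases reduce the work to the additive structure, since the additive group of a ring with the $I$-adic topology is a topological group (indeed a topological ring by Theorem \ref{Theorem 3}).

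For the ``only if'' direction, the idea is very direct. Since $J$ is a basic open neighborhood of $0$ in $R'$ (take $a=0$, $n=1$ in the definition of the $J$-adic base), continuity of $f$ forces $f^{-1}(J)$ to be an open neighborhood of $0$ in $R$. Hence $f^{-1}(J)$ contains a basic open neighborhood of $0$, which is exactly some $I^n$, and this yields $f(I^n)\subseteq J$.

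For the ``if'' direction, suppose $f(I^n)\subseteq J$ for some $n\geqslant 1$. Since $f$ is additive and the two topologies make the underlying additive groups into topological groups, continuity of $f$ is equivalent to continuity at $0$; this is the standard translation argument (given a basic open $a'+J^m$ in $R'$ containing $f(x)$, observe $a'+J^m = f(x) + J^m$, so it suffices to produce a neighborhood $U$ of $0$ in $R$ with $f(U)\subseteq J^m$, and then $x+U$ works). So it remains to find, for each $m\geqslant 1$, a basic neighborhood of $0$ in $R$ whose image lies in $J^m$. The natural candidate is $I^{nm}$. The key computation is that $f(I^{nm})\subseteq J^m$: every element of $I^{nm}=(I^n)^m$ is an $R$-linear combination of products $a_1\cdots a_m$ with each $a_i\in I^n$; applying the ring homomorphism $f$, each $f(a_i)$ lies in $J$ by hypothesis, so $f(a_1)\cdots f(a_m)\in J^m$, and multiplying by $f(r)\in R'$ keeps us inside the ideal $J^m$.

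There is no real obstacle here; the only mildly delicate point is the passage from the hypothesis $f(I^n)\subseteq J$ to $f(I^{nm})\subseteq J^m$, which boils down to the fact that $f$ being a ring homomorphism sends the ideal product $I^n\cdots I^n$ ($m$ times) into $J^m$. Once that inclusion is in hand, the reduction to continuity at $0$ via the additive topological group structure finishes the argument.
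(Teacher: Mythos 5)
Your proposal is correct and follows essentially the same route as the paper: the forward direction extracts some $I^n$ inside the open set $f^{-1}(J)$ from a basic neighborhood of $0$, and the reverse direction uses $f(I^{nm})\subseteq J^m$ together with translation to check openness of preimages of basic opens. The paper merely states the inclusion $f(I^{nd})\subseteq J^{d}$ without comment, whereas you spell out why a ring homomorphism carries the $m$-fold product of $I^n$ into $J^m$; this is a harmless elaboration, not a difference in method.
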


\begin{proof} Assume $f$ is continuous. We know that $J$ is an open subset of $R'$ and so $f^{-1}(J)$ is an open subset of $R$. But $0\in f^{-1}(J)$. So there exists some $a\in R$ and a natural number $n\geqslant1$ such that $0\in a+I^{n}\subseteq f^{-1}(J)$. It follows that $a\in I^{n}$ and so $f(I^{n})\subseteq J$. To see the converse, it will be enough to show that $f^{-1}(b+J^{d})$ is an open subset of $R$ where $b\in R'$ and $d\geqslant1$. Take $r\in f^{-1}(b+J^{d})$.
By hypothesis, $f(I^{nd})\subseteq J^{d}$ and so $r\in r+I^{nd}\subseteq f^{-1}(b+J^{d})$. Hence, $f^{-1}(b+J^{d})$ is an open set.
\end{proof}

As an immediate consequence of the above result, if $I$ and $J$ are ideals of a ring $R$ then the $J$-adic topology is contained in the $I$-adic topology (in other words, the $I$-adic topology is finer than the $J$-adic topology) if and only if $I^{n}\subseteq J$ for some $n\geqslant1$. In particular, if $\mathfrak{p}$ and $\mathfrak{q}$ are prime ideals of $R$ then  $\mathfrak{p}$-adic and $\mathfrak{q}$-adic topologies are the same if and only if $\mathfrak{p}=\mathfrak{q}$. \\

Remember that a subset $E$ of a topological space $X$ is called \emph{locally closed} if for each point $x\in E$ there is an open neighborhood $U\subseteq X$ of $x$ such that $U\cap E$ is a closed subset of $U$ (clearly this notion is a generalization of the closed subset). We can  generalize it a little further as: a subset $E$ of a topological space $X$ is called \emph{weak closed} if there exists some open $U\subseteq X$ such that $U\cap E$ is a nonempty closed subset of $U$. This notion enables us to reformulate a well-known technical result in a more simple way:

\begin{theorem} In a topological group, every weak closed subgroup is closed.
\end{theorem}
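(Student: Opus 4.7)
The plan is to prove the stronger statement $\overline{H}=H$ for a weak closed subgroup $H$ of a topological group $G$. I would first invoke the standard fact (used already in the proof of Theorem \ref{Theorem 10 on}) that the closure of a subgroup of a topological group is again a subgroup, so it suffices to show $\overline{H}\subseteq H$. Fix an open set $U\subseteq G$ witnessing the weak closedness of $H$, so that $U\cap H$ is a nonempty closed subset of $U$.

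The first reduction is to arrange $e\in U$. Since $U\cap H$ is nonempty, I would pick $h_{0}\in U\cap H$ and replace $U$ by $h_{0}^{-1}U$. Left translation by $h_{0}^{-1}$ is a homeomorphism of $G$, and because $h_{0}\in H$ it carries $H$ onto $H$ and hence $U\cap H$ onto $h_{0}^{-1}U\cap H$. So the new open set still witnesses weak closedness of $H$ and now contains $e$.

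The key step is to establish $U\cap\overline{H}\subseteq H$. Since $U\cap H$ is closed in $U$, its complement in $U$, namely $U\setminus H$, is open in $U$; and because $U$ is itself open in $G$, this complement is open in $G$ as well. Thus any point of $U\setminus H$ admits an open neighborhood in $G$ disjoint from $H$, and hence does not lie in $\overline{H}$. Contrapositively, every point of $U\cap\overline{H}$ lies in $H$.

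To finish, take an arbitrary $x\in\overline{H}$. As $e\in U$, the translate $xU$ is an open neighborhood of $x$, so there exists $h\in H\cap xU$; write $h=xu$ with $u\in U$. Then $u=x^{-1}h$ lies in $\overline{H}$ (since $\overline{H}$ is a subgroup containing both $x$ and $h$) and also in $U$, so the previous step gives $u\in H$. Hence $x=hu^{-1}\in H$, as desired. I expect no serious obstacle beyond recognizing the correct way to exploit the translation symmetry of $G$: the hypothesis only yields closedness in the local set $U$, so the homogeneity of $G$ must be used both to normalize $U$ to contain $e$ and then to transport the local conclusion to arbitrary points of $\overline{H}$ via the neighborhood $xU$.
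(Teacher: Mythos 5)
Your proof is correct and complete. The paper itself gives no argument here --- it simply cites \cite[Chap I, Theorem 4.11]{Warner} --- so you have supplied a genuine self-contained proof where the paper outsources one. Your argument is essentially the standard one: each of the three steps checks out. The translation $h_{0}^{-1}U$ correctly preserves both the openness of the witnessing set and the closedness of its intersection with $H$ (since $h_{0}\in H$ implies $h_{0}^{-1}H=H$), and makes the set contain $e$. The observation that $U\setminus H=U\setminus(U\cap H)$ is open in $U$, hence open in $G$ and disjoint from $H$, correctly yields $U\cap\overline{H}\subseteq H$. And the final step legitimately uses that $\overline{H}$ is a subgroup (Theorem \ref{thm 6 six}(i) of the paper) to place $u=x^{-1}h$ in $U\cap\overline{H}\subseteq H$ and conclude $x=hu^{-1}\in H$. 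No gaps.
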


\begin{proof} See \cite[Chap I, Theorem 4.11]{Warner}.
\end{proof}

\begin{corollary} Every finite weak closed subset of a topological group which is closed under the group operation is a closed subgroup.
\end{corollary}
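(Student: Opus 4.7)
The plan is to reduce this to the previous theorem by first showing that any finite subset $S$ of $G$ that is closed under multiplication is automatically a subgroup, and then invoking the statement that every weak closed subgroup of a topological group is closed.

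For the subgroup step I would argue as follows. Fix $s \in S$. Since $S$ is closed under the group operation, the powers $s, s^2, s^3, \ldots$ all lie in $S$, and finiteness of $S$ forces $s^i = s^j$ for some $1 \leqslant i < j$; hence $s^{j-i} = e$, which shows $e \in S$. Moreover, $s^{-1} = s^{j-i-1}$, and this element lies in $S$ as well (for $j - i \geqslant 2$ it is a positive power of $s$, and for $j - i = 1$ we have $s = e$, so $s^{-1} = e \in S$). Thus $S$ is a subgroup of $G$.

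Once $S$ is recognized as a subgroup, it is a weak closed subgroup by hypothesis, so the preceding theorem (every weak closed subgroup of a topological group is closed) immediately gives that $S$ is a closed subgroup, completing the proof.

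I do not anticipate a real obstacle here: the argument that a finite multiplicatively closed subset of a group is a subgroup is completely standard pigeonhole, and the closedness is handed to us by the theorem quoted just above in the excerpt. The only point worth stating carefully is the case $j - i = 1$ in the inverse computation, which one might overlook.
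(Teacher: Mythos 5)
Your proposal is correct and follows essentially the same route as the paper: reduce to the fact that a finite nonempty multiplicatively closed subset of a group is a subgroup, then apply the preceding theorem that weak closed subgroups are closed. The only difference is that you spell out the pigeonhole argument (including the $j-i=1$ edge case) that the paper dismisses as well known.
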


\begin{proof} It is well-known and easy to check that every finite nonempty subset of a group which is closed under the group operation is a subgroup. Then by the above theorem, it is also a closed subset.
\end{proof}

\begin{example} The ring of integers modulo two $\mathbb{Z}_{2}=\{0,1\}$ with the Sierpi\'{n}sky topology $\mathscr{T}=\{\emptyset,\mathbb{Z}_{2},\{0\}\}$ is not a topological ring, because the additive map $f:\mathbb{Z}_{2}\times\mathbb{Z}_{2}\rightarrow\mathbb{Z}_{2}$ is not continuous: $f^{-1}(0)=\{(0,0),(1,1)\}$ is not open.
\end{example}

The implication ``$\Rightarrow$" of the following result is well-known. We show that the revere is also true:

\begin{lemma}\label{Theorem 10 on} Let $H$ be a subgroup of a topological group $G$. Then  $H$ is dense in $G$ if and only if the quotient topology over the set $G/H$ is trivial. 
\end{lemma}

\begin{proof} First assume that $H$ is dense in $G$.  If $U$ is a nonempty open subset of $G/H$, then $f^{-1}(U)$ is a nonempty open subset of $G$ where $f:G\rightarrow G/H$ is the canonical map. Then $f^{-1}(U)\cap xH\neq\emptyset$ for all $x\in G$, because $xH$ is dense in $G$. Thus $xh\in f^{-1}(U)$ for some $h\in H$. Then $xH=xhH\in U$ and so $U=G/H$. 
This shows that the quotient topology over the set $G/H$ is trivial. Conversely, if $U$ is a nonempty open subset of $G$ then $f(U)=\{uH: u \in H\}$ is a nonempty open subset of $G/H$, because the canonical map $f:G\rightarrow G/H$ is an open map. Thus $\{uH: u \in H\}=G/H$ and so $u\in H$ for some $u \in U$. This shows that $H$ is dense in $G$. 
\end{proof}

\begin{remark}\label{Remark 5 five 5} It can be easily seen that in the category of Hausdorff topological spaces, every morphism (continuous map) with dense image is an epimorphism. The converse is also true. More precisely, if $f:X\rightarrow Y$ is an epimorphism of this category, then its image is dense in $Y$. Although several proofs are presented in the literature to the reverse implication, there are some minor gaps in the arguments. For instance,  in the proof of \cite[Examples A 3.14(iii)]{Hofmann} (on pages 795-796), the relation $R$ is not an equivalence relation, because it is not reflexive: if $y \in Y\setminus X$, then $(y,s)$ does not have relation with itself. In what follows, we present a correct proof for this implication and also fix the above gap. The (right) equivalence relation $R$ over the space $Y\times\{0,1\}$ should be defined as: $(a,i)R(b,j)$ if $a=b\in F=\overline{f(X)}$ or $(a,i)=(b,j)$.
It is clear that the equivalence class $[(a,i)]=\{(a,0), (a,1)\}$ if $a\in F$, otherwise $[(a,i)]=\{(a,i)\}$. 
Then the quotient space $Z=(Y\times\{0,1\})/R$ is Hausdorff. Indeed, take two distinct points $x=[(a,i)]$ and $y=[(b,j)]$ in $Z$. The map $h:Z\rightarrow Y$ given by $[(y,i)]\mapsto y$ is continuous, because $h\circ\pi\circ p$ and $h\circ\pi\circ q$ are the identity maps where $p,q:Y\rightarrow Y\times\{0,1\}$  and $\pi:Y\times\{0,1\}\rightarrow Z$ are the canonical maps. 
If $h$ maps $x$ and $y$ to distinct points of $Y$, then take the inverse images of disjoint neighborhoods, because $Y$ is Hausdorff. If $h(x)=h(y)$ then $a=b\in Y\setminus F$ and $i\neq j$. Thus  
$x\in U:=\pi\big((Y\setminus F)\times\{i\}\big)$ and $y\in V:=\pi\big((Y\setminus F)\times\{j\}\big)$. Clearly $U$ and $V$ are disjoint open subsets of $Z$. Hence, $Z$ is Hausdorff. But $\pi\circ p\circ f=\pi\circ q\circ f$. Since $f$ is an epimorphism, $\pi\circ p=\pi\circ q$. This shows that $Y=F=\overline{f(X)}$.
\end{remark}

 
\end{document}